\title{Rank and Randomness}
\author{Rupert H\"olzl and Christopher P.\ Porter}
\newtheorem{theorem}{Theorem}[section]
\newtheorem{lemma}[theorem]{Lemma}
\newtheorem{question}[theorem]{Question}
\theoremstyle{remark}
\newtheorem{rmks}{Remarks}[section]
\newtheorem*{Examples}{Examples}
\renewcommand{\O}{\mathcal{O}}
\renewcommand{\P}{\mathcal{P}}
\newcommand{\Q}{\mathcal{Q}}
\newcommand{\calS}{\mathcal{S}}
\newcommand{\U}{\mathcal{U}}
\newcommand{\MLR}{\mathsf{MLR}}
\newcommand{\dom}{\text{dom}}
\newcommand{\llb}{\llbracket}
\newcommand{\rrb}{\rrbracket}
\newcommand{\cs}{2^\omega}
\newcommand{\uh}{{\upharpoonright}}
\renewcommand{\phi}{\varphi}
\newcommand{\str}{2^{<\omega}}
\newcommand{\rk}{\mathit{rk}}
\newcommand{\atom}{\mathsf{Atoms}}
\newcommand{\claimqed}{\hfill\quad$\triangle$}
\begin{document}
\sloppy

\begin{abstract}
We show that for each computable ordinal $\alpha>0$ it is possible to find  in each Martin-L\"of random $\Delta^0_2$~degree a sequence~$R$ of Cantor-Bendixson rank $\alpha$, while ensuring that the sequences that inductively witness $R$'s~rank are all Martin-L\"of random with respect to a single countably supported and computable measure. This is a strengthening for random degrees of a recent result of Downey, Wu, and Yang, and can be understood as a randomized version of it.

\end{abstract}

\maketitle

\section{Introduction}\label{intro}

The notion of a random sequence, which was originally defined by Martin-L\"of for spaces endowed with the Lebesgue measure, can also be studied in spaces endowed with other computable measures. One particular instance is given by sequences that are random with respect to a computable, countably supported measure $\mu$ on $\cs$, that~is, a computable measure for which there is some countable collection $(X_i)_{i\in\omega}$ such that ${\mu\bigl(\bigcup_{i\in\omega}\{X_i\}\bigr)=1}$. The behavior of such sequences has been studied by Bienvenu and Porter~\cite{BiePor12}, Porter~\cite{Por15}, as well as H\"olzl and Porter~\cite{HolPor16}.

Kautz~\cite{Kau91} showed that any sequence $X$ that satisfies $\mu(\{X\})>0$ for some computable measure~$\mu$ is itself computable. Consequently, one might expect that randomness with respect to a computable, countably supported measure is a trivial notion; and in fact, such measures are referred to as \emph{trivial measures} in the above articles, following terminology of Kautz. But the terminology is misleading, as in fact there are computable, countably supported measures $\mu$ for which there exist other sequences $X$ that are  random  with respect to $\mu$, besides those satisfying $\mu(\{X\})>0$.
As shown in the above studies, these sequences have exotic properties, such as having extremely slow-growing initial segment complexity or the oracle power to compute fast-growing functions.

Following the terminology of Levin and Zvonkin~\cite{ZvoLev70}, we refer to sequences that are random with respect to a computable measure as \emph{proper}. Bienvenu and Porter~\cite{BiePor12} constructed the first example of a proper sequence of Cantor-Bendixson rank $1$;\footnote{For the expert reader, we mention already at this point that if a sequence~$X$ of Cantor-Bendixson rank $1$ is random with respect to a computable measure~$\mu$ then $\mu$ must have atoms but $X$ cannot be one of them.} and Porter~\cite{Por15} applied the same technique to show that in every $\Delta^0_2$ random Turing degree (that is, a $\Delta^0_2$ degree containing a random sequence), there is a proper sequence of Cantor-Bendixson rank $2$.  These results naturally lead to the following questions:
\begin{itemize}
\item[--] For each computable ordinal $\alpha$, is there a proper sequence $X$ of rank~$\alpha$?
\item[--] Moreover, can such a proper sequence be found in each $\Delta^0_2$ random degree?
\end{itemize}

In this article, we answer both questions in the affirmative.  In fact, we prove something significantly stronger, namely that we can find a computable, countably supported measure~$\mu$ such that the desired $X$ is Martin-L\"of random with respect to $\mu$ and such that we can find a collection of $\mu$-random sequences of rank lower than that of $X$ that witness the rank of $X$. The details of this strong property will be discussed in Section~\ref{subsec-murank}.

Our main result is the following.
\begin{theorem}\label{thm-main}
 For each $\Delta^0_2$ random degree $\mathbf{r}$ and each computable ordinal $\alpha>0$, there is a countably supported computable measure $\mu$ and a sequence $R\in\mathbf{r}$ such that
\begin{itemize}
\item[(i)] $R$ is random with respect to $\mu$, 
\item[(ii)] $R$ has Cantor-Bendixson rank $\alpha$, and
\item[(iii)] the support of $\mu$ is rank-faithful.
\end{itemize}
\end{theorem}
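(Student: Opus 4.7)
The plan is to proceed by effective transfinite induction on the computable ordinal $\alpha$, using a fixed system of ordinal notations so that the successor and limit cases can be distinguished uniformly. Fix once and for all a $\Delta^0_2$ Martin-L\"of random $X \in \mathbf{r}$; at each stage $\alpha$ the sequence $R_\alpha$ produced will be obtained from $X$ by a computable bijective recoding (for instance, by inserting computable marker bits at selected positions), so that $R_\alpha \equiv_T X$ and hence $R_\alpha \in \mathbf{r}$. For the base case $\alpha = 1$, I would apply the rank-$1$ version of the Bienvenu-Porter construction, in the refinement used by Porter to place a rank-$2$ random in every $\Delta^0_2$ random degree, producing a computable countably supported measure $\mu_1$ with rank-faithful support relative to which $R_1$ is $\mu_1$-random of rank $1$.

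For the successor step $\alpha = \beta + 1$, assume inductively that a computable measure $\mu_\beta$ and a $\mu_\beta$-random sequence $R_\beta \in \mathbf{r}$ of rank $\beta$ with rank-faithful support are in hand. I would choose, computably in the notation for $\alpha$, a sequence $(\sigma_n)_{n \in \omega}$ of pairwise incompatible binary strings, all of which are initial segments of the intended $R_\alpha$, together with positive computable weights $p_n$ with $\sum_n p_n < 1$. On each cone $[\sigma_n]$ I place a scaled isomorphic copy of $\mu_\beta$ (via the natural push-forward under $Y \mapsto \sigma_n Y$), and distribute the remaining mass $1 - \sum_n p_n$ along the branch carrying $R_\alpha$. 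The support of the resulting $\mu_\alpha$ then consists of countably many scaled copies of the rank-$\beta$ support accumulating to $R_\alpha$, assigning $R_\alpha$ rank exactly $\beta + 1$; conservation of randomness under the computable embeddings ensures that both $R_\alpha$ and the shifted copies of $R_\beta$ are $\mu_\alpha$-random. The limit step $\alpha = \sup_n \alpha_n$, where $(\alpha_n)_n$ is the fundamental sequence attached to the notation, is handled analogously, except that the cone $[\sigma_n]$ now receives a scaled copy of $\mu_{\alpha_n}$ rather than a copy of a single $\mu_\beta$; then $R_\alpha$ is a limit in the support of random sequences of unbounded rank $< \alpha$, so its rank is exactly $\alpha$.

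The main obstacle I anticipate is the preservation of rank-faithfulness through the limit stage: one must verify that no $\mu_\alpha$-random sequence of unintended rank appears in the glued support, and in particular that $R_\alpha$ itself has rank exactly $\alpha$ rather than some larger ordinal. This requires fine control over the prefixes $(\sigma_n)$ and the weights $(p_n)$ so that the tree structure in the support matches the one prescribed by the recursion and that the computable scaling embeddings attaching the sub-measures do not create spurious accumulation points. A secondary, more bookkeeping-level issue is uniformity: every step of the recursion must be carried out computably in a fixed notation for $\alpha$, so that $\mu_\alpha$ itself is computable (and not merely computable for each individual $\alpha$), which forces the limit-stage construction to be driven entirely by the notation's fundamental sequences and to interact correctly with the choice of recoding producing $R_\alpha$ from $X$.
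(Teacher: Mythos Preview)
Your high-level outline---transfinite induction on a notation for~$\alpha$, with a rank-$1$ base case and recursive gluing at successor and limit stages---is the right shape, and matches the paper's overall strategy. But the specific gluing you describe has several genuine gaps that would prevent the argument from going through.

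First, there is an internal inconsistency: you ask for a sequence~$(\sigma_n)$ of \emph{pairwise incompatible} strings that are \emph{all initial segments of~$R_\alpha$}. Any two initial segments of a single sequence are comparable, so these conditions are contradictory. Presumably you mean strings that branch off from~$R_\alpha$ at level~$n$, but then the~$\sigma_n$ are determined by~$R_\alpha$, which is noncomputable. This is the real problem: your measure $\mu_\alpha = \sum_n p_n\cdot(\text{pushforward of }\mu_\beta\text{ to }[\sigma_n])$ would then fail to be computable, since to evaluate~$\mu_\alpha(\tau)$ one must know which~$\sigma_n$ extend or are extended by~$\tau$, and that information encodes~$R_\alpha$.

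Second, your sentence ``distribute the remaining mass $1-\sum_n p_n$ along the branch carrying~$R_\alpha$'' is fatal: if that remaining mass is positive, then~$R_\alpha$ is a $\mu_\alpha$-atom, hence computable by Kautz's theorem, contradicting~$R_\alpha\in\mathbf r$. If instead $\sum_n p_n=1$, you have given no argument that~$R_\alpha$ is $\mu_\alpha$-random; the ``conservation of randomness under computable embeddings'' you invoke applies to the copies of~$R_\beta$ sitting inside the cones, not to the new accumulation point~$R_\alpha$ itself.

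The paper avoids all of these difficulties by never building~$\mu$ directly as a weighted sum over noncomputable cones. Instead, every measure in the induction is of the form~$\lambda_\Phi$ for a \emph{total} Turing functional~$\Phi$ (so computability is automatic), and the target sequence is~$R=\Phi(A)$ for a Lebesgue-random~$A$ (so $R\in\MLR_\mu$ is automatic by preservation of randomness). At the successor step the paper takes a computable \emph{join}: splitting $A=A_0\oplus A_1$, it sets $R=\Theta_\beta(A_0)\oplus\Theta_1(A_1)$, and reads off the rank via Owings' theorem $\rk_{\P_0\otimes\P_1}(X\oplus Y)=\rk_{\P_0}(X)\oplus\rk_{\P_1}(Y)$; van Lambalgen's theorem guarantees~$A_0,A_1$ are relatively random, which is needed both for the inductive hypothesis and for rank-faithfulness (via a second theorem of Owings). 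At a limit stage the paper uses a \emph{dynamic join}: a single total functional~$\Xi$ that, on input~$X=\bigoplus_i X_i$, interleaves the outputs of the~$\Theta_{\gamma_i}(X_i)$ in phases, arranged so that for~$X\neq A$ only finitely many phases complete (keeping the support countable and the rank below~$\alpha$), while on~$X=A$ all phases complete and the rank is exactly~$\alpha$. Your tree-gluing picture is morally what the support of~$\mu$ looks like in the end, but the point is that one must \emph{produce} that picture as the image of a total functional rather than assemble it by hand from noncomputable data.
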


We note that Theorem \ref{thm-main} is related to a recent result of Downey, Wu, and Yang~\cite{DowWuYan15} who showed that for every $\Delta^0_2$ degree $\mathbf{a}$ and every computable ordinal~$\alpha>0$, there is some~${A\in\mathbf{a}}$ with Cantor-Bendixson rank $\alpha$.    This latter result generalizes several older ones, namely that (a)~for every computable ordinal~$\alpha>0$, there is some $\Delta^0_2$ degree $\mathbf{a}$ that contains a set of Cantor-Bendixson rank $\alpha$ (due to Cenzer and Smith~\cite{CenSmi89}), (b)~every $\Delta^0_2$~degree contains a rank $1$ point (ibid.), and (c)~for every c.e.\ degree~$\mathbf{c}$ and every computable ordinal $\alpha>0$, there is a c.e.\ set $C\in\mathbf{c}$ of Cantor-Bendixson rank~$\alpha$ (due to Cholak and Downey~\cite{ChoDow93}). Our result can thus be viewed as a ``randomized" version of the above result of Downey, Wu, and Yang.

Before we turn to the proof of Theorem \ref{thm-main}, we review the relevant background in Section~\ref{sec_background} and discuss the special cases of Theorem \ref{thm-main} where $\alpha$ is finite and~${\alpha=\omega}$ in Section~\ref{sec_special_case}.  We then turn to the full proof
in Section~\ref{sec_full_proof}.

\section{Background}\label{sec_background}

\subsection{Notation}

For $A,B\in\cs$, $A\oplus B$ is the computable join, where ${(A\oplus B)(2n)=A(n)}$ and $(A\oplus B)(2n+1)=B(n)$ for $n\in\omega$.  Moreover, for~$n\geq 1$ and $A_0,\dotsc, A_n\in\cs$, we define~$\bigoplus_{i=0}^nA_i$ recursively by
\[
\bigoplus_{i=0}^nA_i=\Biggl(\bigoplus_{i=0}^{n-1}A_i\Biggr)\oplus A_n
\]
For $Y,Z\in\cs$, let $Y\uh_Z$ be the sequence that satisfies
$
Y\uh_Z(n)=Y(p_Z(n)),
$
where $p_Z$ is the principal function of $Z$, that is, $p_Z(n)$ is the $(n+1)^{\mathrm{st}}$ element of $Z$ in increasing order.

\subsection{Randomness with respect to a computable measure}

Recall that a probability measure on $\cs$ is determined by its values on sets of the form $\llb\sigma\rrb=\{X\in\cs\colon\sigma\prec X\}$, where $\prec$ is the initial segment relation.  A measure~$\mu$ on~$\cs$ is thus called computable if the function $\sigma\mapsto\mu(\llb\sigma\rrb)$ is computable as a real-valued function.  Hereafter we will write $\mu(\llb\sigma\rrb)$ as $\mu(\sigma)$.  The Lebesgue measure is denoted by $\lambda$.

We will take Turing functionals $\Phi\colon \cs\rightarrow\cs$ to be defined in terms of pairs of finite strings, with the condition that comparable input strings map to comparable output strings; such a map can be extended to $\cs$ in the natural way.  Computable measures are precisely those measures that are induced by almost total Turing functionals, where a Turing functional~$\Phi\colon \cs\rightarrow\cs$ is almost total if $\lambda(\dom(\Phi))=1$.  Given an almost total Turing functional $\Phi$, the measure induced by $\Phi$, written $\lambda_\Phi$, is defined by
$
\lambda_\Phi(\sigma)=\lambda(\Phi^{-1}(\llb\sigma\rrb))
$.
Here we will focus exclusively on computable measures that are induced by \emph{total} Turing functionals.

Recall that for a computable measure $\mu$, a $\mu$-Martin-L\"of test is a sequence $(\U_i)_{i\in\omega}$ of uniformly effectively open subsets of $\cs$ such that $\mu(\U_i)\leq 2^{-i}$ for every $i\in\omega$, and that a sequence $X\in\cs$ is  {\em random with respect to} $\mu$, written $X\in\MLR_\mu$, if $X\notin\bigcap_{i\in\omega}\U_i$ for every $\mu$-Martin-L\"of test.  It is well-known that for each computable measure $\mu$, there is a \emph{universal} $\mu$-Martin-L\"of test, that is, a single $\mu$-Martin-L\"of test $(\widehat{\U}_i)_{i\in\omega}$ such that $X\in\MLR_\mu$ if and only if $X\notin\bigcap_{i\in\omega}\widehat{\U}_i$.
Clearly, $\MLR_\mu$ is contained in 
$\{X\in2^\omega\colon\forall n\; \mu(X\uh n)>0\}$, the \emph{support} of~$\mu$.

Martin-L\"of randomness can be relativized to an oracle $A$ in a straightforward manner:  we simply replace $(\U_i)_{i\in\omega}$ in the definition of randomness by an $A$\nobreakdash-computable sequence $(\U^A_i)_{i\in\omega}$ of $A$-effectively open subsets of $\cs$.  An important result concerning relative randomness is van Lambalgen's theorem, according to which, for $A,B\in\cs$, $A\oplus B$ is random if and only if $A$ is random relative to $B$ and $B$ is random.

If $\Phi$ is an almost total Turing functional such that $\mu=\lambda_\Phi$, one can show that ${\Phi(\MLR)=\MLR_\mu}$ (see, for instance, Bienvenu and Porter~\cite{BiePor12}).  The inclusion $\subseteq$ is sometimes referred to as the \emph{preservation of randomness}, whereas the inclusion~$\supseteq$ is sometimes referred to as the \emph{no-randomness-from-nonrandomness principle}.

The proof of Theorem \ref{thm-main} will draw on the fact that there are $\Delta^0_2$ random sequences.  Recall that a sequence $A\in\cs$ is $\Delta^0_2$ if there is a uniformly computable sequence of finite sets $(A_s)_{s\in\omega}$ (called a $\Delta^0_2$-approximation of $A$) such that
$
\lim_{s\rightarrow\infty}A_s(n)=A(n)
$
for every $n\in\omega$.

For a measure $\mu$, let $\atom_\mu=\{X\in\cs\colon \mu(\{X\})>0\}$ be the set of {\em atoms} of~$\mu$. A measure without atoms is called {\em continuous}. For a computable measure~$\mu$, the following facts are straightforward to establish:
\begin{itemize}
\item[--] (Kautz~\cite{Kau91}) If $X\in\atom_\mu$, then $X$ is computable.
\item[--] If $X\in\atom_\mu$, then $X\in\MLR_\mu$.
\item[--] If $X$ is computable and $\mu(\{X\})=0$, then $X\notin\MLR_\mu$.
\end{itemize}

\subsection{Cantor-Bendixson rank}

Recall that the \emph{Cantor-Bendixson derivative}~$D(\mathcal{P})$ of a set~$\mathcal{P}$ is the set of nonisolated points in $\mathcal{P}$.
We can iterate the Cantor-Bendixson derivative of~$\mathcal{P}$ as follows:
\begin{itemize}
\item[--] $D^0(\mathcal{P})=\mathcal{P}$;
\item[--] $D^{\alpha+1}(\mathcal{P})=D(D^\alpha(\mathcal{P}))$ for any ordinal $\alpha$; and
\item[--] $D^{\kappa}(\mathcal{P})=\bigcap_{\alpha<\kappa}D^\alpha(\mathcal{P})$ for any limit ordinal $\kappa$. 
\end{itemize}
The \emph{Cantor-Bendixson rank} of a closed set $\mathcal{P}$, denoted $\rk(\mathcal{P})$, is the least ordinal~$\alpha$ such that $D^{\alpha+1}(\mathcal{P}) = D^\alpha(\mathcal{P})$.

We are interested here in the notion of Cantor-Bendixson rank in the context of $\Pi^0_1$ classes, that is, effectively closed subsets of $\cs$.
$X\in\cs$ is \emph{ranked} if there is a $\Pi^0_1$~class $\P$ such that $X\in D^\alpha(\P)\setminus D^{\alpha+1}(\P)$ for some ordinal $\alpha$, and the \emph{Cantor-Bendixson rank} of $X$ in $\P$, denoted~$\rk_\P(X)$, is the least $\alpha$ such that  $X\in D^\alpha(\P)\setminus D^{\alpha+1}(\P)$. For ranked~$X\in\cs$, $\rk(X)$~is the least $\alpha$ such that  $\rk_\P(X)=\alpha$ for some $\Pi^0_1$ class $\P$.  Kreisel \cite{Kre59} proved that if $\P$ is a $\Pi^0_1$~class, then $\rk(\P)$ is less than or equal to $\omega_1^{\mathrm{CK}}$, the least non-computable ordinal, from which it follows that for every ranked $X\in\cs$, $\rk(X)<\omega_1^{\mathrm{CK}}$. 
Lastly, we say that a $\Pi^0_1$~class $\P$ is \emph{rank-faithful} if for all ranked $X\in\P$, $\rk(X)=\rk_\P(X)$.

Note that if a sequence $X$ is ranked, say $\rk(X)=\alpha$ for some $\alpha$, then there are infinitely many distinct sequences $Y_i$, for ${i\in\omega}$, that branch off of~$X$ such that,
\begin{itemize}
\item[--] in the case that $\alpha=\beta+1$, $\rk(Y_i)=\beta$, and
\item[--] in the case that $\alpha$ is a limit, ${\rk(Y_i)<\rk(X)}$ and $\sup_{i\in\omega}\rk(Y_i)=\alpha$.
\end{itemize}
We will informally say that $(Y_i)_{i\in\omega}$ {\em witnesses} the rank of $X$.\label{sdjhsdfgjsdgrtwerwer}  Note further that $\rk(X)=0$ if and only if $X$ is computable (as, on one hand, every isolated point in a $\Pi^0_1$ class is computable; and, on the other hand, $\{X\}$ is a $\Pi^0_1$~class for every computable sequence~$X$).
For more details on rank and $\Pi^0_1$ classes, see Cenzer~\cite{Cen99}.

The following result will be useful in the proof of Theorem \ref{thm-main}.
\begin{lemma}[Cenzer~\cite{Cen99}]\label{lem-rk}
For $A,B\in\cs$, if $A\leq_{\mathrm{tt}} B$ and $B$ is ranked, then $A$ is ranked and 
$
{\rk(A)\leq\rk(B)}$.
If furthermore $A\equiv_{\mathrm{tt}}B$ holds, then ${\rk(A)=\rk(B)}$.
\end{lemma}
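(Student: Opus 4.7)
A truth-table reduction $A=\Phi(B)$ is given by a total computable functional, so $\Phi\colon\cs\to\cs$ is total and continuous; and since $\cs$ is compact, the image of any $\Pi^0_1$ class under $\Phi$ is again a $\Pi^0_1$ class. My plan is to take a $\Pi^0_1$ class $\mathcal{P}$ with $\rk_\mathcal{P}(B)=\rk(B)=\alpha$, cut it down to a cylinder isolating $B$ at level $\alpha$, push the restriction through $\Phi$, and then use a rank-tracking inclusion to conclude that $A$ has rank at most $\alpha$ in the resulting class.

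Since $B$ is isolated in $D^\alpha(\mathcal{P})$, I fix $\sigma\prec B$ with $\llb\sigma\rrb\cap D^\alpha(\mathcal{P})=\{B\}$ and set $\mathcal{P}':=\mathcal{P}\cap\llb\sigma\rrb$. A short transfinite induction (using that a sequence of distinct elements of $\mathcal{P}$ converging to $B$ is eventually trapped inside $\llb\sigma\rrb$) shows $B\in D^\alpha(\mathcal{P}')$; combined with $D^\alpha(\mathcal{P}')\subseteq D^\alpha(\mathcal{P})\cap\llb\sigma\rrb=\{B\}$, this yields $D^\alpha(\mathcal{P}')=\{B\}$ and hence $D^{\alpha+1}(\mathcal{P}')=\emptyset$. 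The heart of the argument is then the transfinite inclusion
\[
D^\gamma(\Phi(\mathcal{S}))\subseteq\Phi(D^\gamma(\mathcal{S}))
\]
for every $\Pi^0_1$ class $\mathcal{S}$ and every ordinal $\gamma$. At successor steps, given $A^*\in D^{\gamma+1}(\Phi(\mathcal{S}))$ witnessed by distinct $A^*_n\to A^*$ in $D^\gamma(\Phi(\mathcal{S}))$, the induction hypothesis yields preimages $C^*_n\in D^\gamma(\mathcal{S})$; by compactness a subsequence converges to some $C^*\in D^\gamma(\mathcal{S})$ with $\Phi(C^*)=A^*$, and $C^*_{n_k}\neq C^*$ must occur infinitely often (else $A^*_{n_k}=A^*$ eventually, contradicting distinctness), so $C^*\in D^{\gamma+1}(\mathcal{S})$. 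At a limit ordinal, the nested nonempty closed sets $\Phi^{-1}(\{A^*\})\cap D^\beta(\mathcal{S})$ for $\beta<\gamma$ have nonempty total intersection by compactness, producing a common preimage lying in $D^\gamma(\mathcal{S})$.

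Feeding $\mathcal{S}=\mathcal{P}'$ and $\gamma=\alpha+1$ into the inclusion gives $D^{\alpha+1}(\Phi(\mathcal{P}'))\subseteq\Phi(\emptyset)=\emptyset$, so $\rk_{\Phi(\mathcal{P}')}(A)\leq\alpha$ and therefore $\rk(A)\leq\rk(B)$; the tt-equivalence case then follows by symmetry. I expect the limit step of the key inclusion to be the main obstacle: naively intersecting the hypotheses $A^*\in\Phi(D^\beta(\mathcal{S}))$ would produce potentially different preimages at different $\beta$, so one needs a genuinely topological splicing argument that combines closedness of $\Phi^{-1}(\{A^*\})$ — which is where totality of $\Phi$, i.e.\ the \emph{tt} hypothesis rather than just Turing reducibility, enters — with compactness of $\cs$. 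A second subtlety worth flagging is why the restriction to $\mathcal{P}'$ is essential: applied to $\mathcal{P}$ itself the inclusion would not prevent $A$ from having unrelated preimages of rank exceeding $\alpha$, so one would fail to bound $\rk_{\Phi(\mathcal{P})}(A)$ by $\alpha$ directly.
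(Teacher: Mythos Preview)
The paper does not prove this lemma; it is stated with attribution to Cenzer~\cite{Cen99} and used as a black box. There is thus no ``paper's own proof'' to compare against.

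Your argument is correct. The two ingredients---that restricting a $\Pi^0_1$ class to a clopen neighborhood preserves local rank (your transfinite induction showing $D^\gamma(\P')=D^\gamma(\P)\cap\llb\sigma\rrb$), and the key inclusion $D^\gamma(\Phi(\calS))\subseteq\Phi(D^\gamma(\calS))$---are both sound, and the compactness-based limit step is handled properly by intersecting the nested closed sets $\Phi^{-1}(\{A^*\})\cap D^\beta(\calS)$. Your remark about why the cut-down to $\P'$ is necessary is apt. One small point you leave implicit but should make explicit in a written proof: the image $\Phi(\P')$ is not merely closed but a $\Pi^0_1$ class, which is needed since $\rk(A)$ is defined as a minimum over $\Pi^0_1$ classes containing~$A$; this follows because totality of $\Phi$ gives, via compactness, a computable modulus $m(n)$ with $|\Phi(\rho)|\geq n$ whenever $|\rho|=m(n)$, from which one builds a computable tree for $\Phi(\P')$.
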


 We point out that 
the measure $\mu$ constructed in the proof of Theorem~\ref{thm-main} must necessarily have atoms.  For, as shown by the authors in previous work~\cite{HolPor16}, if a sequence $X$ is random with respect to a computable, continuous measure, then it must be complex, that~is, there must be a computable, unbounded, non-decreasing function $f(n)$ such that $K(X\uh n)\geq f(n)$ for all $n\in\omega$, where $K(\cdot)$~denotes prefix-free Kolmogorov complexity.  As shown by Binns~\cite{Bin08}, every $\Pi^0_1$~class containing a complex element is perfect.\footnote{In fact, Binns~\cite{Bin08} showed that every such class is \emph{computably} perfect, a stronger property.} Thus, no sequence that is random with respect to a computable, continuous measure is ranked.

\subsection{Implications of rank-faithfulness}\label{subsec-murank}

We now explain the significance of the third condition in Theorem~\ref{thm-main}. 
Let $\mu$ be a computable measure with rank-faithful support $\P$.
Let $(\U_i)_{i\in\omega}$ be a universal $\mu$-Martin-L\"of test and write $\mathcal{K}_i=2^\omega\setminus \U_i$ for all $i\in\omega$. Clearly $\mathcal{K}_i\subseteq\P$ for every $i\in\omega$, and since it can only be easier to isolate a path in a subset, we have ${\rk_{\mathcal{K}_i}(R)\leq \rk_\P(R)}$ for every~$i\in\omega$ and $R\in\mathcal{K}_i$.
But since $\rk(R)=\rk_\P(R)$ by the rank-faithfulness of $\P$, we must then have $\rk_{\mathcal{K}_i}(R)= \rk(R)$ for every $i\in\omega$ such that $R\in\mathcal{K}_i$. In other words, $R$'s Cantor-Bendixson rank $\rk(R)$ can already be observed inside $\mathcal{K}_i$.

Now since $\mathcal{K}_i$ contains only $\mu$-random sequences, we have found a set of sequences witnessing $R$'s Cantor-Bendixson rank $\rk(R)$ that 
consists entirely of $\mu$-randoms. In fact, all of these sequences inductively have the same property.

This shows that the third condition in Theorem~\ref{thm-main} is a very strong property and that, for random degrees, the theorem is a significant strengthening of the result of Downey, Wu, and Yang~\cite{DowWuYan15} cited above.

\subsection{Computable ordinals}

In the proof of Theorem \ref{thm-main}, we will make use of Kleene's system of notations for computable ordinals. Recall that the set of ordinal notations, Kleene's~$\O$, is defined in terms of a function $|\cdot|_\O$ mapping each $a\in\O$ to an ordinal.  We define $|\cdot|_\O$ and a partial order $<_\O$ on $\O$ as follows (see, for instance, Ask and Knight~\cite{AshKni00}). First, $|1|_\O=0$.  Next, if $|a|_\O=\alpha$, then~$|2^a|_\O=\alpha+1$.  Then we define $b<_\O 2^a$ to hold if and only if $b<_\O a$ or $b=a$.  Lastly, for a limit ordinal~$\alpha$, we assign it
the notation $3^e\cdot5$ for any $e\in\omega$ such that $\phi_e$ is a total computable function from~$\omega$ to~$\O$,
\[
\phi_e(0)<_\O\phi_e(1)<_\O\phi_e(2)<_\O\dotsc,
\]
and, setting $\alpha_n=|\phi_e(n)|_\O$, $\alpha=\sup \alpha_n$. Lastly, let $b <_\O 3^e\cdot 5$ if $b<_\O\phi_e(n)$ for some $n\in\omega$.

\section{Special cases of Theorem \ref{thm-main}}\label{sec_special_case}

We study special cases of Theorem~\ref{thm-main} to illustrate the main ingredients of its full proof.

\subsection{The case $\alpha=1$}
We modify a construction of Porter~\cite{Por15}.
\begin{theorem}\label{thm-rk1}
 For each $\Delta^0_2$ random degree $\mathbf{r}$ there is a countably supported computable measure $\mu$ and a sequence $R\in\mathbf{r}$ such that
\begin{itemize}
\item[(i)] $R$ is random with respect to $\mu$, 
\item[(ii)] $R$ has Cantor-Bendixson rank $1$, and
\item[(iii)] the support of $\mu$ is rank-faithful.
\end{itemize}
\end{theorem}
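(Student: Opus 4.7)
The strategy is to adapt Porter's construction from~\cite{Por15} (for the rank-$2$ case) to the rank-$1$ setting. We fix a $\Delta^0_2$ Martin-L\"of random sequence $A\in\mathbf{r}$ with a computable approximation $(A_s)_{s\in\omega}$, and build a total Turing functional $\Phi\colon\cs\to\cs$. Setting $\mu=\lambda_\Phi$ will yield the required computable measure: its support will be $\{R\}\cup\{C_s\}_{s\in\omega}$, where $R$ is the sole non-atomic point of the support and is Turing-equivalent to~$A$, while the $C_s$ are computable atoms converging to~$R$.

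The functional $\Phi$ is designed as follows. It reads its input $Z$ one bit at a time, interpreting the bits as a signal stream. At each stage, the current signal instructs $\Phi$ either to \emph{commit}, in which case it outputs a fixed computable tail producing some~$C_s$, or to \emph{continue}, in which case it appends one more bit of a ``main branch'' derived from the approximation $(A_s)$. The commit positions $n_s\to\infty$ are chosen to grow fast enough that each $C_s$ is isolated within $\{R\}\cup\{C_s\}_{s\in\omega}$, while $C_s\to R$ in~$\cs$; and the main-branch bits encode $A$ in such a way that $R\equiv_T A$, placing $R\in\mathbf{r}$.

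We then verify the three conclusions. $\mu$ is computable since $\Phi$ is total, and its support is the $\Pi^0_1$ class $\{R\}\cup\{C_s\}_{s\in\omega}$, which is countable. By preservation of randomness, $R\in\MLR_\mu$ because $\Phi^{-1}(\{R\})$ contains a $\lambda$-random preimage extractable from~$A$. The rank of $R$ is exactly~$1$: it is non-computable (Turing-equivalent to~$A$), giving $\rk(R)\geq 1$, and inside the support it is the unique limit of the otherwise isolated computable atoms $C_s$, giving $\rk_{\supp(\mu)}(R)=1$. The $C_s$ have global rank~$0$ (computable) and are isolated in the support, so all ranks in the support match the global ranks and the support is rank-faithful.

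The main technical obstacle is to design~$\Phi$ so that atom computability and the Turing-equivalence $R\equiv_T A$ can coexist. Each atom~$C_s$ must be a computable sequence depending only on the finite prefix of~$Z$ read up to the commit, so the commit decision cannot encode information about~$A$. On the other hand, the main branch must carry enough of the approximation $(A_s)$ that $A$ can be recovered from~$R$. Reconciling these competing requirements is accomplished by a carefully chosen block structure in~$\Phi$'s input-reading protocol, following the mechanism of Porter's construction~\cite{Por15} specialized to a single-level commitment scheme rather than the two-level one needed for rank~$2$.
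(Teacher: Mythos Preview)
Your outline has a genuine gap at the point where you claim $R\in\MLR_\mu$. In the signal-stream model you describe, the input $Z$ supplies only commit/continue decisions while the main-branch bits are ``derived from the approximation $(A_s)$'', i.e.\ hard-coded. The preimage $\Phi^{-1}(\{R\})$ is then the set of inputs that signal ``continue'' at every stage. If that set has positive Lebesgue measure, $R$ is a $\mu$-atom and hence computable, contradicting $R\equiv_{\mathrm T}A$. If it has measure zero, it is a $\Pi^0_1$ null class determined by a computable pattern of forced bits, and no Martin-L\"of random lies in it; then preservation of randomness gives you nothing, and in fact $R\notin\MLR_\mu$. Your sentence ``$\Phi^{-1}(\{R\})$ contains a $\lambda$-random preimage extractable from~$A$'' does not follow from the construction as stated.

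The missing idea (and the one the paper uses) is to make the input $X$ itself the candidate for $A$, and to let the ``continue/commit'' decision be the outcome of comparing $X$ against the computable approximation: at stage $n$ one searches for $s>s_{n-1}$ with $A_s\uh n = X\uh n$, outputs $1^{s}0$ if found, and otherwise outputs $1^\omega$. Then the unique input that continues forever is $X=A$, so $\Phi^{-1}(\{R\})=\{A\}$ is a measure-zero set containing a genuine random, and $R=\Phi(A)\in\MLR_\mu$ by preservation of randomness. Your worry that ``the commit decision cannot encode information about $A$'' is misplaced: the approximation $(A_s)$ is computable, so a commit decision that depends on it (and on a finite prefix of $X$) still yields a computable atom, namely a string followed by $1^\omega$. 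Once you make this change, your verification of rank and rank-faithfulness goes through essentially as you wrote it.
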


\begin{proof}
Let $A\in\mathbf{r}$ be random and let $(A_s)_{s\in\omega}$ be a $\Delta^0_2$-approximation of $A$. Without loss of generality let 
$A_s\subseteq [0,s)$ for every~$s\in\omega$, and thus $A_s\neq A_{s+1}$ for every~$s\in\omega$.  For each~$X\in\cs$, we recursively define an $X$-computable sequence $(s_i^X)_{i\geq 1}$ of integers by
\begin{itemize}
\item[--] letting $s^X_1$ be the least $s$ such that $X\uh 1=A_s\uh 1$, and
\smallskip

\item[--] for all $n \geq 1$, by letting $s^X_{n+1}$ be the least $s>s_n^X$ such that ${X\uh (n+1)=A_s\uh (n+1)}$.
\end{itemize}
Note that we allow for the possibility that $s_n^X$ is undefined for some $n\in\omega$, in which case it follows that $s_m^X$ is also undefined for all $m>n$.
Using the sequence~$(s_i^X)_{i\geq 1}$, we next define a sequence of finite strings $\sigma^X_1,\sigma^X_2,\dotsc$ such that for each $i\geq 0$, $\sigma_i$ is either finite in length or undefined.  For~$n\geq 0$, $\sigma^X_n$ is defined via
\[
\sigma^X_n=
	\left\{
		\begin{array}{ll}
			{1^{s_n^X}}^\smallfrown0 & \mbox{if } s_n^X \text{ is defined}, \\
			\mbox{undefined} & \mbox{otherwise}.
		\end{array}
	\right.
\]
We then define $\Phi_A(X)$ by
\[
\Phi_A(X)=
	\left\{
		\begin{array}{ll}
			\sigma_1^X\,\sigma_2^X\cdots & \mbox{if } \sigma_i ^X\mbox{ is defined for every } i\geq 0, \\
			\sigma_1^X\,\sigma_2^X\cdots\sigma_{i-1}^X1^\omega & \mbox{if $i$ is least such that } \sigma_i^X \mbox{ is undefined.}
		\end{array}
	\right.
\]

\noindent It is clear by the definition that $\Phi_A$ is a total Turing functional.   It follows that $\P=\Phi_A(\cs)$ is a $\Pi^0_1$ class.  Let $R=\Phi_A(A)$.   Setting $\mu=\lambda_{\Phi_A}$, since $A\in\MLR$, it follows from the preservation of randomness that $R=\Phi_A(A)\in\MLR_\mu$.  We now verify several claims.

\medskip

\noindent {\em Claim 1.}  $R\in\mathbf{r}$.

\smallskip

\noindent {\em Proof.} Clearly $R=\Phi_A(A)\leq_{\mathrm{T}} A$.  Moreover, $A\leq_{\mathrm{T}} (s^A_n)_{n\in\omega}  \leq_{\mathrm{T}} \Phi_A(A)$.  Thus we have~$R\equiv_{\mathrm{T}} A$.\claimqed

\medskip

\noindent {\em Claim 2.}  If $X\neq A$, then there is a least $n\in\omega$ such that $\sigma_n^X$ is undefined, and thus $\Phi_A(X)=\sigma_1^X\,\sigma_2^X\,\cdots\sigma_{n-1}^X1^\omega$.

\smallskip

\noindent {\em Proof.} Immediate from the definitions.\claimqed

\medskip

\noindent {\em Claim 3.}  If $X\neq A$, then $\Phi_A(X)$ is isolated in $\P$.

\nopagebreak\smallskip\nopagebreak

\noindent {\em Proof.} First observe that for $Y,Z\in\cs$, the property $Y\uh n=Z\uh n$ holds if and only if for~all~$i< n$ either $\sigma_i^Y=\sigma_i^Z$  or both $\sigma_i^Y$ and $\sigma_i^Z$ are undefined.  Now given $X\neq A$, by Claim 2 we have $\Phi_A(X)=\sigma_1^X\,\sigma_2^X\,\cdots\sigma_{n-1}^X1^\omega$ for some~$n\in\omega$.  Suppose that $\Phi_A(X)$ is not isolated in~$\P$.  Then there is an infinite sequence  $(Y_i)_{i\in\omega}$ of sequences such that, for every~${i\in\omega}$, $\Phi_A(Y_i)$~branches off of $\Phi_A(X)$ above $\sigma_1^X\,\sigma_2^X\,\cdots\sigma_{n-1}^X$ after agreeing with $\Phi_A(X)$ on an initial segment whose length is different for each $i\in\omega$.  
Note that for each such $Y_i$, if $Y_i(n)=X(n)$, then since $Y_i\uh n=X\uh n$ and $\sigma_n^X$ is undefined, we have that $\sigma_n^{Y_i}$ is undefined and thus $\Phi_A(Y_i)=\Phi_A(X)$.  It follows that we must have $Y_i(n)\neq X(n)$ for all $i\in\omega$.  However, it then follows that for all $i\neq j$ we have $Y_i\uh (n+1)=Y_j\uh (n+1)$, and thus either $\sigma_n^{Y_i}$ is undefined for all~$i$, in which case $\Phi_A(Y_i)=\Phi_A(X)$ for every $i\in\omega$, which is impossible, or $\sigma_n^{Y_{i\vphantom{j}}}=\sigma_n^{Y_{j\vphantom{i}}}$ for all~$i\neq j$.  But this latter condition implies that every $\Phi_A(Y_i)$ branches off of~$\Phi_A(X)$ at exactly the same place, which contradicts our choice of the sequence $(Y_i)_{i\in\omega}$.  Thus $\Phi_A(X)$ must be isolated in $\P$.\claimqed

\medskip

\noindent {\em Claim 4.}  $\rk_\P(R)=1$.

\smallskip

\noindent {\em Proof.} Let $X_n=(A\uh n)^\frown 0^\omega$ for all $n\in\omega$.  Clearly $X_n\neq A$, as $A$ is random, and so by Claim~2, $\Phi_A(X_n)\neq \Phi_A(A)=R$. By Claim~3, $\Phi_A(X_n)$ is isolated in $\P$ for each $n\in\omega$.  In addition, for $n\in\omega$, since $X_n\uh n=A\uh n$, we have ${\sigma^A_1\,\sigma^A_2\dotsc\sigma^A_n\prec\Phi_A(X_n)}$.  So there must be infinitely many $i\neq j$ such that $\Phi_A(X_i)\neq \Phi_A(X_j)$.  Thus, there is a subsequence of distinct sequences $(\Phi_A(X_{n_k}))_{k\in\omega}$ that branch off of $R$ in $\P$, from which the Claim follows. \claimqed

\medskip

By Claim 1, $R$ is non-computable.  Thus there is no $\Pi^0_1$ class $\Q$ in which $R$ is isolated, which implies that $\rk(R)=\rk_\P(R)=1$.   It is now immediate that $\P$ is rank-faithful. 

Finally, for each $Y\in \P$ such that $Y\neq \Phi_A(A)$, $Y=\Phi_A(X)$ for some $X\neq A$, hence $Y=\Phi_A(X)=\sigma_1^X\,\sigma_2^X\,\cdots\sigma_{n-1}^X1^\omega$ by Claim 2.  It follows that $\P$ is countable and $\mu$ is countably supported. 
\end{proof}

\subsection{The case $\alpha<\omega$}  The move from $\alpha=1$ to an arbitrary positive integer requires several new ideas.  We review some definitions; for more details, see, for example, Odifreddi~\cite{MR1718169}.

Given ordinals $\alpha$ and $\beta$, the \emph{Hessenberg sum} of $\alpha$ and $\beta$, written $\alpha\oplus\beta$, is defined as follows.  First, let $\alpha=\omega^{\gamma_1}a_1+\omega^{\gamma_2}a_2+\cdots+\omega^{\gamma_k}a_k$ and ${\beta=\omega^{\gamma_1}b_1+\omega^{\gamma_2}b_2+\cdots+\omega^{\gamma_k}b_k}$ be the Cantor normal forms of $\alpha$ and $\beta$ (where we allow the $a_i$ and $b_i$ to be~$0$ for~$1 \leq i \leq k$).  Then
\[
\alpha\oplus\beta=\omega^{\gamma_1}(a_1+b_1)+\omega^{\gamma_2}(a_2+b_2)+\cdots+\omega^{\gamma_k}(a_k+b_k).
\]
Note that if $\alpha$ and $\beta$ are both finite, then $\oplus$ is just ordinary addition.  Next, for $\Pi^0_1$ classes~$\P$ and~$\Q$, we define the product  of $\P$ and $\Q$ to be 
\[
\P\otimes \Q=\{X\oplus Y\colon X\in\P\;\&\;Y\in\Q\}.
\]  

We will use the following two results of Owings.

\begin{theorem}[Owings~\cite{Owi97}]\label{thm-owings1}
For $X,Y\in\cs$ and $\Pi^0_1$ classes $\P$ and $\Q$,
\[
\rk_{\P\otimes \Q}(X\oplus Y)=\rk_\P(X)\oplus \rk_Q(Y).
\]
\end{theorem}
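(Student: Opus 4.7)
My plan is to reduce the theorem to the following key identity, proved by transfinite induction on $\alpha$: for every pair of $\Pi^0_1$~classes $\P$ and $\Q$ and every ordinal $\alpha$,
\[
D^\alpha(\P\otimes\Q) \;=\; \bigcup\bigl\{D^\beta(\P)\otimes D^\gamma(\Q) : \beta\oplus\gamma\geq\alpha\bigr\}.
\]
Granting this, the theorem follows at once, since $X\oplus Y\in D^\alpha(\P\otimes\Q)$ becomes equivalent to $\rk_\P(X)\oplus\rk_\Q(Y)\geq\alpha$, so the least $\alpha$ with $X\oplus Y\notin D^{\alpha+1}(\P\otimes\Q)$ is exactly $\rk_\P(X)\oplus\rk_\Q(Y)$.

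The base case is trivial. For a limit $\alpha=\sup_i\alpha_i$, I would combine $D^\alpha(\P\otimes\Q)=\bigcap_iD^{\alpha_i}(\P\otimes\Q)$ with continuity of the Hessenberg sum in each argument: given pairs $(\beta_i,\gamma_i)$ witnessing membership at levels $\alpha_i$ via the inductive hypothesis, the suprema $\beta=\sup_i\beta_i$ and $\gamma=\sup_i\gamma_i$ put $X$ into $D^\beta(\P)$ and $Y$ into $D^\gamma(\Q)$ and satisfy $\beta\oplus\gamma\geq\sup_i(\beta_i\oplus\gamma_i)\geq\alpha$.

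The successor step hinges on the algebraic identity $(\beta+1)\oplus\gamma=(\beta\oplus\gamma)+1$, which shows that every successor Hessenberg sum arises from incrementing exactly one coordinate. The forward inclusion at $\alpha+1$ is the easier direction: given $\beta\oplus\gamma\geq\alpha+1$, write (say) $\beta=\beta'+1$ so that $\beta'\oplus\gamma\geq\alpha$; any $X\in D^{\beta'+1}(\P)$ is a limit of distinct $X_n\in D^{\beta'}(\P)$, the inductive hypothesis places each $X_n\oplus Y$ inside $D^\alpha(\P\otimes\Q)$, and these distinct points converge to $X\oplus Y$, forcing the latter into $D^{\alpha+1}(\P\otimes\Q)$.

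The main obstacle is the reverse inclusion at successor stages. Let $X\oplus Y\in D^{\alpha+1}(\P\otimes\Q)$ be witnessed by distinct $(X_n\oplus Y_n)\to X\oplus Y$ inside $D^\alpha(\P\otimes\Q)$; the inductive hypothesis supplies $\beta_n,\gamma_n$ with $\beta_n\oplus\gamma_n\geq\alpha$, $X_n\in D^{\beta_n}(\P)$, and $Y_n\in D^{\gamma_n}(\Q)$. I would pass to subsequences so that either one projection is constantly $X$ (resp.\ $Y$) while the other consists of distinct terms, or both $(X_n)$ and $(Y_n)$ are sequences of distinct points different from $X$ and $Y$; by a further refinement each of $(\beta_n)$ and $(\gamma_n)$ is then either eventually constant or strictly increasing to a limit. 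In the one-sided case, strict monotonicity of $\oplus$ forces $\beta_n\geq\rk_\P(X)$, so the distinct $X_n$ sit in $D^{\rk_\P(X)}(\P)$ and push $X$ into $D^{\rk_\P(X)+1}(\P)$, adding $1$ to the Hessenberg sum. In the two-sided case, eventually constant coordinates add $1$ to the relevant rank via their distinct convergent subsequences, strictly increasing coordinates push $X$ or $Y$ into $D^{\beta^*}(\P)$ or $D^{\gamma^*}(\Q)$ at the respective limit, and if both coordinates are strictly increasing then $\beta_n\oplus\gamma_n$ itself strictly increases, so its supremum---still bounded below by $\alpha$---must strictly exceed $\alpha$. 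The technical heart of the argument is this case analysis and the accompanying ordinal bookkeeping.
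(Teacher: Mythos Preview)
The paper does not prove this theorem; it is quoted from Owings~\cite{Owi97} and used as a black box, so there is no in-paper argument to compare against.

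Your approach via the key identity
\[
D^\alpha(\P\otimes\Q)=\bigcup\bigl\{D^\beta(\P)\otimes D^\gamma(\Q):\beta\oplus\gamma\geq\alpha\bigr\}
\]
is the standard one and is essentially correct, but two steps need tightening.  In the forward successor inclusion you write ``$\beta=\beta'+1$'', yet $\beta$ may well be a limit.  What you actually need is the recursive characterization
\[
\beta\oplus\gamma=\sup\bigl(\{(\beta'\oplus\gamma)+1:\beta'<\beta\}\cup\{(\beta\oplus\gamma')+1:\gamma'<\gamma\}\bigr),
\]
which guarantees that whenever $\beta\oplus\gamma\geq\alpha+1$ there is either some $\beta'<\beta$ with $\beta'\oplus\gamma\geq\alpha$ or some $\gamma'<\gamma$ with $\beta\oplus\gamma'\geq\alpha$; then $X\in D^{\beta'+1}(\P)$ (respectively $Y\in D^{\gamma'+1}(\Q)$) and your limit argument goes through.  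In the one-sided subcase of the reverse inclusion you appear to have swapped the roles of the constant and the varying coordinate: if $X_n\equiv X$ then it is the distinct $Y_n$ that force $\rk_\Q(Y)$ up, not the $X_n$.  Once these are fixed, the case analysis you outline (constant/strictly increasing in each coordinate, using strict monotonicity of $\oplus$ to gain the extra $+1$) is sound.
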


\begin{theorem}[Owings~\cite{Owi97}]\label{thm-owings2}
For $X,Y\in\cs$, if $\rk(X\oplus Y)=\rk(X)$, then \[Y\leq_{\mathrm{T}} X.\]
\end{theorem}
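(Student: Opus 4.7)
The plan is to give a short direct proof, bypassing any induction on $\alpha=\rk(X\oplus Y)=\rk(X)$. I will fix a $\Pi^0_1$ class realizing the rank of $X\oplus Y$, restrict it to the ``fiber above $X$'', and argue that this restriction isolates $Y$ as a point of a $\Pi^{0,X}_1$ class.

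First, I would choose a $\Pi^0_1$ class $\mathcal{R}$ with $\rk_\mathcal{R}(X\oplus Y)=\alpha$, which exists by the definition of $\rk$. Since $X\oplus Y\in D^\alpha(\mathcal{R})\setminus D^{\alpha+1}(\mathcal{R})$, there is $\sigma\prec X\oplus Y$ with $D^\alpha(\mathcal{R})\cap\llb\sigma\rrb=\{X\oplus Y\}$. The central step will be: any $V=X\oplus W\in\mathcal{R}\cap\llb\sigma\rrb$ with $W\neq Y$ would satisfy $V\notin D^\alpha(\mathcal{R})$, forcing $\rk_\mathcal{R}(V)<\alpha$ and hence $\rk(V)<\alpha$; but $X\leq_{\mathrm{tt}} V$ by reading even bits, so Lemma~\ref{lem-rk} yields $\rk(X)\leq\rk(V)<\alpha=\rk(X)$, a contradiction. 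Therefore no such $V$ exists.

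Next, I would observe that the fiber $\mathcal{R}^X:=\{W\in\cs:X\oplus W\in\mathcal{R}\}$ is a $\Pi^{0,X}_1$ class (membership in $\mathcal{R}$ is $\Pi^0_1$ and the substitution $W\mapsto X\oplus W$ is $X$-computable), and the conclusion of the previous step yields $\mathcal{R}^X\cap\llb\sigma_{\mathrm{odd}}\rrb=\{Y\}$, where $\sigma_{\mathrm{odd}}$ is the string formed by the odd-indexed bits of $\sigma$. Thus $Y$ is an isolated point of the $\Pi^{0,X}_1$ class $\mathcal{R}^X$, and the standard (relativized) fact that isolated points of $\Pi^0_1$ classes are computable delivers $Y\leq_{\mathrm{T}} X$.

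I do not expect a substantial obstacle. The only subtlety is that the application of Lemma~\ref{lem-rk} requires $V$ to be ranked, but this is automatic: $V\notin D^\alpha(\mathcal{R})$ ensures that $V$ has some well-defined $\mathcal{R}$-rank strictly below~$\alpha$, so $\rk(V)\leq\rk_\mathcal{R}(V)<\alpha$ is meaningful. Everything else is topological bookkeeping inside~$\cs$.
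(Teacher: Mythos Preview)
The paper does not prove Theorem~\ref{thm-owings2}; it is quoted from Owings~\cite{Owi97} without argument, so there is no in-paper proof to compare against. Your proposal is correct and is essentially the standard argument one would expect: pick a $\Pi^0_1$ class $\mathcal{R}$ realizing $\rk(X\oplus Y)=\alpha$, isolate $X\oplus Y$ inside $D^\alpha(\mathcal{R})$ by a cylinder $\llb\sigma\rrb$, and use Lemma~\ref{lem-rk} together with the tt-reduction $X\leq_{\mathrm{tt}}X\oplus W$ to rule out any other $W$ in the $X$-fiber of $\mathcal{R}\cap\llb\sigma\rrb$; then $Y$ is an isolated path of the $\Pi^{0,X}_1$ class $\{W: X\oplus W\in\mathcal{R}\}$, hence $Y\leq_{\mathrm{T}}X$.

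Two cosmetic points you could make explicit in a full write-up: (i)~without loss of generality take $|\sigma|$ even, so that $W\succ\sigma_{\mathrm{odd}}$ genuinely forces $X\oplus W\in\llb\sigma\rrb$; and (ii)~the step ``$V\notin D^\alpha(\mathcal{R})$ implies $\rk_\mathcal{R}(V)<\alpha$'' uses that the least $\gamma$ with $V\notin D^\gamma(\mathcal{R})$ is a successor, since derivatives intersect at limits. You already flagged the latter; neither is a gap.
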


We now prove the result for finite ordinals $\alpha$.

\begin{theorem}\label{thm-rk2}
 For each $\Delta^0_2$ random degree $\mathbf{r}$ and for all integers $n\geq 1$, there is a countably supported computable measure $\mu$ and a sequence $R\in\mathbf{r}$ such that
\begin{itemize}
\item[(i)] $R$ is random with respect to $\mu$, 
\item[(ii)] $R$ has Cantor-Bendixson rank $n$, and
\item[(iii)] the support of $\mu$ is rank-faithful.
\end{itemize}
\end{theorem}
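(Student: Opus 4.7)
The plan is to reduce to the base case (Theorem~\ref{thm-rk1}) by splitting a $\Delta^0_2$ random $A\in\mathbf{r}$ into $n$ mutually relatively random pieces and then taking the product (in the sense of $\otimes$) of the $\Pi^0_1$ classes produced by the rank-$1$ construction applied to each piece. Concretely, fix a $\Delta^0_2$ random $A\in\mathbf{r}$ and, for $0\leq i<n$, define $A^{(i)}\in\cs$ by $A^{(i)}(k)=A(nk+i)$; each $A^{(i)}$ is $\Delta^0_2$, and iterated applications of van Lambalgen's theorem show that $A^{(i)}$ is Martin-L\"of random relative to $\bigoplus_{j\neq i}A^{(j)}$. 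In particular $A^{(i)}\not\leq_{\mathrm{T}}\bigoplus_{j\neq i}A^{(j)}$, a fact that will be crucial in what follows.

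Next I apply the construction from the proof of Theorem~\ref{thm-rk1} to each $A^{(i)}$, obtaining total computable Turing functionals $\Phi_i$, rank-faithful countably supported $\Pi^0_1$ classes $\P_i=\Phi_i(\cs)$, and rank-$1$ points $R_i=\Phi_i(A^{(i)})\equiv_{\mathrm{T}}A^{(i)}$. These are combined into a single total functional $\Phi(X_0\oplus\cdots\oplus X_{n-1})=\Phi_0(X_0)\oplus\cdots\oplus\Phi_{n-1}(X_{n-1})$, producing a countably supported computable measure $\mu=\lambda_\Phi$ whose support equals $\P=\P_0\otimes\cdots\otimes\P_{n-1}$. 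Setting $R=R_0\oplus\cdots\oplus R_{n-1}$, one has $R\equiv_{\mathrm{T}}A$ (so $R\in\mathbf{r}$) and $R\in\MLR_\mu$ by preservation of randomness.

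By Theorem~\ref{thm-owings1}, $\rk_\P(R)=1\oplus\cdots\oplus 1=n$, giving $\rk(R)\leq n$. The matching lower bound is proved by induction on $k$, showing that $\rk(R_0\oplus\cdots\oplus R_{k-1})=k$ for each $1\leq k\leq n$: the base case $k=1$ is Theorem~\ref{thm-rk1}; for the induction step, tt-reducibility together with Lemma~\ref{lem-rk} and the inductive hypothesis yield $\rk(R_0\oplus\cdots\oplus R_{k-1})\geq k-1$, while equality would force $R_{k-1}\leq_{\mathrm{T}}R_0\oplus\cdots\oplus R_{k-2}$ by Theorem~\ref{thm-owings2}, contradicting the relative randomness noted above. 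Rank-faithfulness of $\P$ follows by the same strategy: any $Z=Z_0\oplus\cdots\oplus Z_{n-1}\in\P$ has each $Z_i$ either equal to $R_i$ or isolated (hence computable) in $\P_i$; if $S=\{i\colon Z_i=R_i\}$, then Theorem~\ref{thm-owings1} and the rank-faithfulness of each $\P_i$ give $\rk_\P(Z)=|S|$, while $Z\equiv_{\mathrm{tt}}\bigoplus_{i\in S}R_i$ combines with the same inductive argument (applied to the independent subfamily indexed by $S$) to yield $\rk(Z)=|S|$.

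The principal technical obstacle is the lower bound $\rk(R)\geq n$. Owings' theorems provide only one-sided estimates on the Cantor-Bendixson rank of a join in terms of its components, so the inductive step has to be powered by Theorem~\ref{thm-owings2} together with the mutual relative randomness of the $A^{(i)}$; without this independence the absolute Cantor-Bendixson rank of the join could in principle collapse well below its in-class rank, and both the exact value of $\rk(R)$ and the rank-faithfulness of $\P$ would fail.
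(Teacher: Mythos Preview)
Your proposal is correct and follows essentially the same route as the paper. Both arguments split a $\Delta^0_2$ random $A\in\mathbf{r}$ into $n$ mutually relatively random columns, apply the rank-$1$ construction of Theorem~\ref{thm-rk1} to each column, and take the $\otimes$-product of the resulting $\Pi^0_1$ classes; both then use Theorem~\ref{thm-owings1} for the in-class rank, Lemma~\ref{lem-rk} plus Theorem~\ref{thm-owings2} together with relative randomness for the absolute lower bound, and the same combination for rank-faithfulness. The only organisational difference is that the paper packages the argument as an induction on the functional $\Psi^n_A$ itself (adding one column at a time and carrying along auxiliary conditions such as $\Psi^n_A(X)\leq_{\mathrm T}A$), whereas you build the $n$-fold product in one stroke and then run a separate induction on $k$ to pin down $\rk(R_0\oplus\cdots\oplus R_{k-1})$ and, for rank-faithfulness, re-run that induction over the subfamily indexed by $S$.
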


\begin{proof} We proceed by induction on $n$.  In particular, for each $n\geq 1$ and each $\Delta^0_2$ random sequence $A$, we will inductively define a total Turing functional $\Psi^n_A$ such that
\begin{itemize}
\item[(a)] $\Psi^n_A(A)$ and $\lambda_{\Psi^n_A}$ satisfy the conditions (i)-(iii) of the theorem for $n$;
\item[(b)] $\Psi^n_A(X)\leq_\mathrm{T}A$ for every $X\in\cs$;
\item[(c)]$\Psi^n_A(A)\equiv_\mathrm{T}A$;
\item[(d)] $\rk_{\Psi^n_A(\cs)}(\Psi^n_A(A))=n$; 
\item[(e)] for every $X\neq A$, $\rk_{\Psi_A^n(\cs)}(\Psi^n_A(X))<n$; and 
\item[(f)] $\Psi^n_A(\cs)$ is rank-faithful.
\end{itemize}
In particular, $\Psi^n_A$ is defined as follows:  Splitting $A$ into an $n$-fold join ${A=\bigoplus_{i=0}^{n-1}A_i}$ we split an input sequence $X$ into $n$ sequences $X_0,\dotsc, X_{n-1}$ so that $X=\bigoplus_{i=0}^{n-1} X_i$ and let
\[
\Psi^n_A(X)=\bigoplus_{i=0}^{n-1}\Phi_{A_i}(X_i),
\]
where, for $0\leq i\leq n-1$, $\Phi_{A_i}$ is defined as in the proof of Theorem \ref{thm-rk1}.  

\medskip

\noindent {\em Base case} $(n=1)$:  This is established by Theorem \ref{thm-rk1}. Moreover, for each $\Delta^0_2$ random sequence $A$, defining $\Psi^0_A$ to be the functional $\Phi_A$ as in the proof of Theorem \ref{thm-rk1}, one can verify that the above conditions (a)-(f) hold.

\medskip

\noindent {\em Inductive step}:  Suppose that for a fixed $n\geq 1$, we have shown that for every $\Delta^0_2$ random degree $\mathbf{r}$ and each random $D\in\mathbf{r}$, the total Turing functional $\Psi^n_D$ as defined above satisfies the conditions (a)-(f).

Let $A\in\mathbf{r}$ be random, and let us consider $A=\bigoplus_{i=0}^nA_i$.  Setting ${A_{<n}=\bigoplus_{i=0}^{n-1}A_i}$, we have $A=A_{<n}\oplus A_n$ by our definition of the $n$-fold join given in Section \ref{sec_background}.  Note that, by van Lambalgen's theorem, $A_{<n}$ and $A_n$ are relatively random, and clearly both are $\Delta^0_2$.

As the Turing degree of $A_{<n}$ is random and $\Delta^0_2$, by the inductive hypothesis, the functional
$
\Psi^n_{A_{<n}}(X)=\bigoplus_{i=0}^{n-1}\Phi_{A_i}(X_i)
$,
where $X=\bigoplus_{i=0}^{n-1}X_i$, satisfies the conditions (a)-(f) above.

We define a new Turing functional $\Psi^{n+1}_A$ as follows:  For $X\in\cs$, splitting up~$X$ as $X=\bigoplus_{i=0}^nX_i$ and setting $X_{<n}=\bigoplus_{i=0}^{n-1}X_i$, we define
\[
\Psi^{n+1}_A(X)=\Psi^n_{A_{<n}}(X_{<n})\oplus\Phi_{A_n}(X_n).
\]  
Let $R=\Psi^{n+1}_A(A)$.  We have  $\Psi^n_{A_{<n}}(X_{<n})\leq_{\mathrm{T}} A_{<n}$ and $\Phi_{A_n}(X_n)\leq_{\mathrm{T}} A_n$
 by condition (b) of the inductive hypothesis.  Thus $\Psi^{n+1}_A(X)\leq_\mathrm{T} A$.  In addition, we have  $\Psi^n_{A_{<n}}(A_{<n})\equiv_{\mathrm{T}} A_{<n}$ and $\Phi_{A_n}(A_n)\equiv_{\mathrm{T}} A_n$
 by condition (c) of the inductive hypothesis, and hence $\Psi^{n+1}_A(A)\equiv_\mathrm{T} A$.  In particular, we have $R\equiv_{\mathrm{T}} A$ and so~$R\in\mathbf{r}$.  In addition, since $\Psi^n_{A_{<n}}$ and $\Phi_{A_n}$ are total, it follows that $\Psi^{n+1}_A$ is total, and so $\P=\Psi^{n+1}_A(\cs)$ is a $\Pi^0_1$ class.  In addition, let $\P_0=\Psi^n_{A_{<n}}(\cs)$ and $\P_1=\Phi_{A_n}(\cs)$, so that~$\P=\P_0\otimes\P_1$.  Let~$\mu=\lambda_{\Psi^{n+1}_A}$; due to our choice of $A$ as random, it follows by preservation of randomness that $R \in \MLR_\mu$.

To determine the ranks of elements of $\P$ we will use Theorem \ref{thm-owings1}.  We first prove a claim.

\medskip

\begin{samepage}
\noindent {\em Claim.} The following statements hold.
\begin{itemize}
\item[($C_1$)] If either $X_{<n}\neq A_{<n}$ or $X_n\neq A_n$, then $\rk_\P(\Psi^{n+1}_A(X))\leq n$.
\item[($C_2$)] If $X=A$, then $\rk_\P(\Psi^{n+1}_A(X))=n+1$.
\end{itemize}
\end{samepage}

\smallskip

\noindent {\em Proof.} ($C_1$) We consider two cases.

\medskip

\noindent {\em Case 1:}  $X_{<n}\neq A_{<n}$.  By the inductive hypothesis, specifically the condition (d) above applied to the functional $\Psi^n_{A_{<n}}$, we have $\rk_{\P_0}(\Psi^n_{A_{<n}}(X_{<n}))<n$.  In addition, from the proof of Theorem \ref{thm-rk1} we have $\rk_{\P_1}(\Phi_{A_n}(X_n))\leq 1$. Then by Theorem \ref{thm-owings1} we have
\begin{equation*}
\begin{split}
\rk_\P(\Psi^{n+1}_A(X))&=\rk_{\P_0\otimes\P_1}(\Psi^n_{A_{<n}}(X_{<n})\oplus\Phi_{A_n}(X_n))\\
&=\rk_{\P_0}(\Psi^n_{A_{<n}}(X_{<n}))\oplus\rk_{\P_1}(\Phi_{A_n}(X_n))\\
&<n+1.
\end{split}
\end{equation*}

\medskip

\noindent {\em Case 2:}  $X_n\neq A_n$.  Then by the proof of Theorem \ref{thm-rk1}, we have ${\rk_{\P_1}(\Phi_{A_n}(X_n))=0}$.  By the inductive hypothesis, we have ${\rk_{\P_0}(\Psi^n_{A_{<n}}(X_{<n}))\leq n}$, and so
\[
\rk_\P(\Psi^{n+1}_A(X))=\rk_{\P_0}(\Psi^n_{A_{<n}}(X_{<n}))\oplus\rk_{\P_1}(\Phi_{A_n}(X_n))\leq n+0=n.
\]

\noindent ($C_2$) By the inductive hypothesis, $\rk_{\P_0}(\Psi^n_{A_{<n}}(X_{<n}))=n$.  Moreover, by Theorem \ref{thm-rk1}, $\rk_{\P_n}(\Phi_{A_n}(A_n))=1$.  Thus by Theorem \ref{thm-owings1} we have
\[
\rk_\P(\Psi^{n+1}_A(A))=\rk_{\P_0}(\Psi^n_{A_{<n}}(X_{<n}))\oplus\rk_{\P_1}(\Phi_{A_n}(A_n))=n+1.
\]
This completes the proof of the Claim.\claimqed

\medskip

We now verify that $\P$ is rank-faithful.  Let $Z\in\P$, and let $X\in\cs$ satisfy $\Psi^{n+1}_A(X)=Z$.  By the inductive hypothesis, $\P_0$ and $\P_1$ are rank-faithful, and~so
\begin{equation}\label{eqeqeq}\begin{split}
\rk_\P(\Psi^{n+1}_A(X))&=\rk_{\P_0}(\Psi^n_{A_{<n}}(X_{<n}))\oplus\rk_{\P_1}(\Phi_{A_n}(X_n))\\
&=\rk(\Psi^n_{A_{<n}}(X_{<n}))\oplus\rk(\Phi_{A_n}(X_n)).\end{split}
\end{equation}
By Lemma \ref{lem-rk}, 
\begin{equation}\label{eqeqeqeq}
\rk(\Psi^{n+1}_A(X))\geq\max\{\rk(\Psi^n_{A_{<n}}(X_{<n})),\rk(\Phi_{A_n}(X_n)) \}.
\end{equation}
 By the proof of Theorem \ref{thm-rk1}, $\rk(\Phi_{A_n}(X_n))\leq1$.  We now consider two cases.  
 
 \medskip

  \noindent {\em Case 1:} $\rk(\Phi_{A_n}(X_n))=0$.  Then by Equation (\ref{eqeqeq}), \[{\rk_\P(\Psi^{n+1}_A(X))=\rk(\Psi^n_{A_{<n}}(X_{<n}))},\] and by Equation (\ref{eqeqeqeq}), \[\rk(\Psi^{n+1}_A(X))\geq \rk(\Psi^n_{A_{<n}}(X_{<n})).\]  Thus \[{\rk(\Psi^{n+1}_A(X))=\rk_\P(\Psi^{n+1}_A(X))}.\]
  
  \medskip

\noindent {\em Case 2:}  $\rk(\Phi_{A_n}(X_n))=1$.  By the proof of Theorem \ref{thm-rk1}, $X_n=A_n$.  By Equations (\ref{eqeqeq}) and~(\ref{eqeqeqeq}), we have
\[
\rk(\Psi^n_{A_{<n}}(X_{<n}))\leq\rk(\Psi^{n+1}_A(X))\leq \rk(\Psi^n_{A_{<n}}(X_{<n}))+1=\rk_\P(\Psi^{n+1}_A(X)).
\]
Suppose for the sake of contradiction that $\rk(\Psi^{n+1}_A(X))=\rk(\Psi^n_{A_{<n}}(X_{<n}))$. Then we have 
$\rk\bigl(\Psi^n_{A_{<n}}(X_{<n})\oplus\Phi_{A_n}(X_n)\bigr)=\rk(\Psi^n_{A_{<n}}(X_{<n}))$,
and so, by Theorem \ref{thm-owings2}, ${\Phi_{A_n}(X_n)\leq_{\mathrm{T}}\Psi^n_{A_{<n}}(X_{<n})}$.  By condition (b) of the inductive hypothesis, $\Psi^n_{A_{<n}}(X_{<n})\leq_{\mathrm{T}}A_{<n}$, and thus
\[
A_n\equiv_{\mathrm{T}}\Phi_{A_n}(A_n)=\Phi_{A_n}(X_n) \leq_{\mathrm{T}}\Psi^n_{A_{<n}}(X_{<n})\leq A_{<n},
\]
which contradicts the fact that $A_n$ and $A_{<n}$ are relatively random.  Consequently, \[{\rk(\Psi^{n+1}_A(X))=\rk_\P(\Psi^{n+1}_A(X))},\]
and $\P$ is rank-faithful.
\end{proof}

\subsection{The case $\alpha=\omega$}

Can the above argument be extended to cover the case~$\alpha=\omega$?   One possible strategy would be to split $A=\bigoplus_{i\in\omega} A_i$ into countably many sequences $(A_i)_{i\in\omega}$ and then interleave the corresponding countably many  functionals $(\Phi_{A_i})_{i\in\omega}$, yielding a sequence of the form $\Psi(X)=\bigoplus_{i\in\omega} \Phi_{A_i}(X)$.
The problem with this approach is that the image of $A$ under $\Psi$ would not be ranked, as there are continuum many ways for a sequence to have the wrong information about at least one subsequence $A_i$ of $A$.  In particular, for each~${S\subseteq\omega}$, there is some $Y^S = \bigoplus_{i\in\omega} Y^S_i \in\cs$ such that $Y^S_i\neq A_i$ if and only if $i\in S$, so that $(\Psi(Y^S))_{S\subseteq\cs}$ would yield an uncountable subset of the range of $\Psi$.

The solution we adopt here is to use a dynamic process for interleaving the computations of
potentially an infinite number of sequences of the form $\Phi_{A_0}(X_0)$, $\Phi_{A_1}(X_1),\Phi_{A_2}(X_2),\dotsc$ for $X=\bigoplus_{i\in\omega}X_i$.  More specifically, if $X\neq A$, then we only end up interleaving finitely many such sequences, while if $X=A$, then all such sequences are interleaved.

The general idea is as follows.  Given a uniformly computable sequence of total Turing functionals $\Phi_0,\Phi_1,\Phi_2,\dotsc$, we define a single procedure $\Xi$, which we will refer to as the \emph{dynamic join functional associated to} $(\Phi_i)_{i\in\omega}$, such that, for $X=\bigoplus_{i\in\omega}X_i$, $\Xi(X)$ is obtained by interleaving the sequences $\Phi_0(X_0),\Phi_1(X_1)$, $\Phi_2(X_2),\dotsc$ (or at least of finitely many such sequences, depending on $X$).  The computation of $\Xi(X)$ proceeds in phases, where in Phase~$n$, certain initial segments of the outputs of $\Phi_0(X_0),\Phi_1(X_1),\dotsc,\Phi_n(X_n)$ are interleaved.

If on some input $X$ Phase~$n$ is successfully completed, then the result of that phase will be a finite string $\tau_n^X$ of output bits.  If, for a given input $X$, every phase is successfully completed, there will be a sequence $(\tau^X_i)_{i\in\omega}$ such that
$
\Xi(X)={\tau^X_0}\,{\tau^X_1}\,{\tau^X_2}\dotsc
$

Moreover, if Phase~$n$ is successfully completed, we will say that the computation~$\Phi_{n+1}(X_{n+1})$ has been \emph{activated} as we only begin  to incorporate $\Phi_{n+1}(X_{n+1})$  into the output sequence $\Xi(X)$ in Phase~$n+1$. For consistency, we consider $\Phi_0(X_0)$ activated unconditionally immediately at the start of the computation of~$\Xi(X)$.

The sets $I_k=\{n\in\omega\colon n\equiv 2^{k}-1\pmod {2^{k+1}}\}$ for $k\in\omega$ will play a role in determining into which locations of the output sequence the bits of $\Phi_0(X_0),\Phi_1(X_1),\dotsc$ are copied.  Thus,
\begin{itemize}
\item[--] $I_0=\{n\in\omega\colon n\equiv 0\pmod 2\}$,
\item[--] $I_1=\{n\in\omega\colon n\equiv 1\pmod 4\}$,
\item[--] $I_2=\{n\in\omega\colon n\equiv 3\pmod 8\}$,
\end{itemize}
and so on.   It is easy to see that $(I_i)_{i\in\omega}$ forms  a partition of $\omega$.  We also let $J_0=\omega$ and 
$
J_{n+1}=\overline{I_0 \cup \dots \cup I_{n}},
$ for all $n \in \omega$.
We now describe the phases of the computation of~$\Xi(X)$.

\medskip

 \noindent {\it Phase 0:}  Copy the bits of $\Phi_0(X_0)$ until the first 0 is copied, in which case the phase is successfully completed with the output $\tau_0^X$ and the computation of~$\Phi_1(X_1)$ is activated.  If no such 0 is copied, then copy the bits of $\Phi_0(X_0)$ forever.

\smallskip

 \noindent {\it Phase $n$ $(n\geq 1)$:}   Assume that, in previous phases, we have started  to copy bits from $\Phi_0(X_0),\Phi_1(X_1),\dotsc,\Phi_{n-2}(X_{n-2}),\Phi_{n-1}(X_{n-1})$ into locations in the sets~$I_0, I_1,\dotsc,$ $I_{n-2}$, $ J_{n-1}$ (or simply $J_0$ in the case that $n=1$), resulting in the output $\tau_0^X\,\tau_1^X\cdots\tau_{n-1}^X$ at the end of Phase~$n-1$ as well as the activation of~$\Phi_n(X_n)$.  (a) For each $i\leq n-1$, in the places in $I_i$ that are greater than or equal to~$|\tau_0^X\,\tau_1^X\cdots\tau_{n-1}^X|$, copy the bits of $\Phi_i(X_i)$  that come after those copied in the previous phase and (b) in the locations with indices in $J_n$ greater than or equal to~$|\tau_0^X\,\tau_1^X\cdots\tau_{n-1}^X|$, begin to copy the bits of $\Phi_n(X_n)$, continuing (a) and (b) until a new~$0$ is copied from each of $\Phi_0(X_0),\Phi_1(X_1),\dotsc, \Phi_{n-1}(X_{n-1})$ (beyond those copied in the previous phases) and an initial~$0$ is copied from $\Phi_n(X_n)$.  In this case, the phase is successfully completed with the output $\tau_0^X\,\tau_1^X\cdots\tau_n^X$ and the computation $\Phi_{n+1}(X_{n+1})$ is activated.  If it is not the case that all of $\Phi_0(X_0),\Phi_1(X_1),\dotsc \Phi_n(X_n)$ produce the desired $0$s in this phase, then forever continue copying the bits of $\Phi_0(X_0),\Phi_1(X_1),\dotsc, \Phi_n(X_n)$ in the respective locations described above.

\medskip

This completes the definition of $\Xi$; it is not difficult to verify that $\Xi$~is total. In the case that every phase is successfully completed in the computation of $\Xi(X)$, we have that for each $k\in\omega$ there are numbers $c_k,d_k\in\omega$ (dependent upon $X$) such that
\begin{equation}\label{eq1}
	\Phi_k(X_k)(i+c_k)=\Xi(X)\uh_{I_k}(i+d_k)
\end{equation}
for all $i\in\omega$. To see this, let $c_k$ be the number of bits of $\Phi_k(X_k)$ that were output during Phase~$k$. Then $\Phi_k(X_k)(c_k)$ is the first bit of $\Phi_k(X_k)$ that is output during the Phase~$k+1$; let $d_k$ be location in $I_k$ where it is written. Then it is immediate that Equation~(\ref{eq1}) holds for~$i=0$. Since the computation is now in Phase~$k+1$, by construction all further bits of $\Phi_k(X_k)$ and no further bits from any other functionals are written into $I_k$. Then Equation~(\ref{eq1}) holds for arbitrary $i \in \omega$. In particular, we have $\Phi_k(X_k)\leq_{\mathrm{tt}}\Xi(X)$ for all $k \in \omega$.  

If Phase~$n$ is the last phase that is successfully completed, 
then, by the same argument as above, for all $k\leq n$ there are $(c_k,d_k)$ such that Equation~(\ref{eq1}) holds. Furthermore, 
there is some $d_{n+1}\in\omega$ such that, for all $i\in\omega$,
\begin{equation}\label{eq2}
\Phi_{n+1}(X_{n+1})(i)=\Xi(X)\uh_{J_{n+1}}(i+d_{n+1}).
\end{equation}
To see this, let $d_{n+1}$ be the location in $J_{n+1}$ of the first bit of $\Phi_{n+1}(X_{n+1})$; this bit is output in the Phase~$n+1$. But as that phase is never successfully completed, by construction all further bits of $\Phi_{n+1}(X_{n+1})$ and no further bits from any other functionals are written into~$J_{n+1}$. Then Equation~(\ref{eq2}) holds for all $i \in \omega$, and so $\Phi_k(X_k)\leq_{\mathrm{tt}}\Xi(X)$ for all $k  \leq n+1$. 

\medskip

We can now prove the $\alpha=\omega$ case of Theorem \ref{thm-main}.

\begin{samepage}
\begin{theorem}\label{thm-omegacase}
 For each $\Delta^0_2$ random degree $\mathbf{r}$ there is a countably supported computable measure $\mu$ and a sequence $R\in\mathbf{r}$ such that
\begin{itemize}
\item[(i)] $R$ is random with respect to $\mu$, 
\item[(ii)] $R$ has Cantor-Bendixson rank $\omega$, and
\item[(iii)] the support of $\mu$ is rank-faithful.
\end{itemize}
\end{theorem}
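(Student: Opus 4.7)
The plan is to apply the dynamic join functional $\Xi$ just constructed to the countable family of rank-$1$ functionals obtained by decomposing a $\Delta^0_2$~random into its columns. Let $A\in\mathbf{r}$ be $\Delta^0_2$~random and write $A=\bigoplus_{i\in\omega}A_i$; iterating van Lambalgen's theorem, each $A_i$ is $\Delta^0_2$~random and the $A_i$ are pairwise relatively random. For each $i$, let $\Phi_{A_i}$ be the functional from the proof of Theorem~\ref{thm-rk1}, and let $\Xi$ be the dynamic join functional associated to the uniformly computable sequence $(\Phi_{A_i})_{i\in\omega}$. Set $R=\Xi(A)$, $\mu=\lambda_\Xi$, and $\P=\Xi(\cs)$. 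Condition~(i) is then immediate from preservation of randomness, and $R\in\mathbf{r}$ follows because totality of $\Xi$ gives $R\leq_\mathrm{T}A$, while the $tt$-reductions $\Phi_{A_i}(A_i)\leq_{\mathrm{tt}}R$ provided uniformly in $i$ by Equation~(\ref{eq1}) combine with $A_i\equiv_\mathrm{T}\Phi_{A_i}(A_i)$ (Claim~1 in the proof of Theorem~\ref{thm-rk1}) to yield $A\leq_\mathrm{T}R$.

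For the countable-support part of condition~(iii), observe that for any $X\neq A$ there is a least $k$ with $X_k\neq A_k$, and by Claim~2 in the proof of Theorem~\ref{thm-rk1}, $\Phi_{A_k}(X_k)$ contains only finitely many $0$'s. Thus only finitely many phases in the computation of $\Xi(X)$ complete; letting $N$ denote the last, Equations~(\ref{eq1}) and~(\ref{eq2}) yield $\Xi(X)\equiv_{\mathrm{tt}}\bigoplus_{i\leq N+1}\Phi_{A_i}(X_i)$. Each $\Phi_{A_i}(X_i)$ ranges over the countable set $\Phi_{A_i}(\cs)$, so $\P\setminus\{R\}$ is countable.

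The main work is to establish $\rk_\P(R)=\omega$ together with rank-faithfulness of $\P$. For the upper bound on $\rk_\P(R)$, the preceding $tt$-equivalence together with the bound $\rk_{\Phi_{A_i}(\cs)}(\Phi_{A_i}(X_i))\leq 1$ and Owings' Theorem~\ref{thm-owings1} applied to the product $\Pi^0_1$~class $\bigotimes_{i\leq N+1}\Phi_{A_i}(\cs)$ shows that $\rk_\P(\Xi(X))<\omega$ for every $X\neq A$; hence $D^\omega(\P)\subseteq\{R\}$. For the lower bound, fix $n\geq 1$ and for each $j$ let $X^{(n,j)}$ agree with $A$ on all columns $i\neq n$ and choose $X^{(n,j)}_n$ to be the $j$-th distinct perturbation of $A_n$ such that $\Phi_{A_n}(X^{(n,j)}_n)$ contains at least one $0$ and agrees with $\Phi_{A_n}(A_n)$ on progressively longer initial segments. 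The $\Xi(X^{(n,j)})$ are then distinct elements of $\P$ that converge to $R$ and, by the same Owings decomposition used above, have $\P$-rank at least $n$. Assembling these families, $R\in D^n(\P)$ for every $n$, and thus $\rk_\P(R)=\omega$.

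Rank-faithfulness of $\P$ and the equality $\rk(R)=\omega$ then follow by the pattern of Theorem~\ref{thm-rk2}: for each $\Xi(X)\in\P$ with $X\neq A$, the $tt$-decomposition combined with Lemma~\ref{lem-rk}, Theorem~\ref{thm-owings2}, and the pairwise relative randomness of the $A_i$'s forces $\rk(\Xi(X))=\rk_\P(\Xi(X))$; for $R$ itself, the approximating sequences from the lower-bound construction have true rank tending to infinity by this inductively established rank-faithfulness, so $\rk(R)\geq\omega$, yielding $\rk(R)=\rk_\P(R)=\omega$. The principal obstacle will be the careful bookkeeping in the transition from the informal phase-based description of $\Xi$ to a rigorous Owings-style analysis of the ranks in $\P$; in particular, translating the local structure of $\P$ near each $\Xi(X)$ into the finite product-rank framework of Theorem~\ref{thm-owings1} requires a precise correspondence between phase completion data and the product decomposition.
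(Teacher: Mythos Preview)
Your approach is essentially the paper's: apply the dynamic join $\Xi$ to $(\Phi_{A_i})_{i\in\omega}$ and verify the same list of properties. One clarification worth making explicit: in your upper-bound paragraph, the $tt$-equivalence $\Xi(X)\equiv_{\mathrm{tt}}\bigoplus_{i\leq N+1}\Phi_{A_i}(X_i)$ together with Theorem~\ref{thm-owings1} and Lemma~\ref{lem-rk} only controls the \emph{absolute} rank $\rk(\Xi(X))$, not $\rk_\P(\Xi(X))$. To bound $\rk_\P$ you genuinely need the local product decomposition of $\P$ that you correctly flag as the principal obstacle---the paper shows $\P\cap\llb\Xi(X)\uh|\tau^*|\rrb=\bigotimes_{i\leq N+1}\calS_i$ for suitable clopen pieces $\calS_i\subseteq\Phi_{A_i}(\cs)$ and then applies Theorem~\ref{thm-owings1} directly to obtain $\rk_\P(\Xi(X))=\#\{i\leq N+1:X_i=A_i\}$. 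So that step is the heart of the argument, not a bookkeeping afterthought. Two minor differences from the paper: its lower-bound witnesses are the simpler truncations $B^n$ (first $n$ columns equal to $A_i$, remaining columns $0^\omega$) rather than your single-column perturbations $X^{(n,j)}$; and for $A\leq_\mathrm{T}R$ it observes that $\Xi^{-1}(\{R\})=\{A\}$ is a $\Pi^0_1(R)$ singleton, which sidesteps any need to argue uniformity of the column-wise reductions.
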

\end{samepage}

\begin{proof}

Let $A\in\mathbf{r}$ be random and let $A=\bigoplus_{i\in\omega}A_i$.  For $i\in\omega$, let $\Phi_{A_i}$ be defined in terms of some $\Delta^0_2$-approximation of $A_i$ as above, let $\Xi$ be the dynamic join functional associated to the sequence $(\Phi_{A_i})_{i\in\omega}$, and set $\P=\Xi(\cs)$, $R=\Xi(A)$, and  $\mu=\lambda_{\Xi}$.  Since $A\in\MLR$ and $\Xi$~is total, the preservation of randomness  implies $R \in \MLR_\mu$.   We verify a series of claims.  

\medskip

\noindent {\em Claim 1.} For $X\neq A$, $\Xi(X)$ only successfully completes finitely many phases.  

\smallskip

\noindent {\em Proof.} 
Suppose that $\Xi(X)$ successfully completes infinitely many phases.  Note that in order for Phase~$n$ to be successfully completed, for $i< n$, $\Phi_{A_i}(X_i)$ must produce $(n+1-i)$ 0s.  Thus, in order for infinitely many phases to be successfully completed, for each $i\in\omega$, $\Phi_{A_i}(X_i)$~must produce infinitely 0s, which occurs only if $X_i=A_i$ by definition of $\Phi_{A_i}$.  Consequently we have $X=A$.
\claimqed
 
 \medskip

\noindent {\em Claim 2.} $\mu$ is countably supported.

\smallskip
 
\noindent {\em Proof.}  For $X\neq A$, since $\Xi(X)$ only successfully completes finitely many phases by Claim 1, $\Xi(X)$~is an interleaving of $\Phi_{A_0}(X_0),\dotsc,\Phi_{A_{j-1}}(X_{j-1})$ for some $j\in\omega$, where for $i< j$, either $\Phi_{A_i}(X_i)=\Phi_{A_i}(A_i)$ or $\Phi_{A_i}(X_i)=\sigma^\smallfrown 1^\omega$ for some $\sigma\in\str$.   This implies that the range of~$\Xi$ is countable, and thus the Claim follows.
\claimqed
 \medskip

\noindent {\em Claim 3.} $\rk(R)=\rk_\P(R)=\omega$.

\smallskip
 
\noindent {\em Proof.} First we show $\rk_\P(R)=\omega$.  For each $X\in\cs$ with $X\neq A$ and~${X=\bigoplus_{i\in\omega}X_i}$, by Claim~1 there is some~$j\in\omega$ such that $\Xi(X)$ only successfully completes Phases $0$ through~${j-1}$.  Then the computation $\Phi_{A_i}(X_i)$ is activated for each $i \leq j$ and the string ${\tau^*=\tau_0^X\tau_2^X\cdots \tau_{j-1}^X}$ is the output obtained at the completion of Phase~$j-1$.  Then there are $\ell_0,\dotsc,\ell_{j-1}\in\omega$ such that $\Xi(X)\uh|\tau^*|$ consists precisely of the bits from
\[
\Phi_{A_0}(X_0)\uh \ell_0,\, \Phi_{A_1}(X_1)\uh \ell_1,\,\dotsc\,,\Phi_{A_{j-1}}(X_{j-1})\uh \ell_{j-1}.
\]
Setting $\ell_{j}=0$, it thus follows from the definition of $\Xi(X)$ and the fact that $\Xi(X)$ does not successfully complete Phase~$j$ that
\begin{equation}\label{eq-claim3}
\Xi(X)\uh_{[|\tau^*|, \,\omega)}=\bigoplus_{i=0}^{j}\Phi_{A_i}(X_i)\uh_{[\ell_i\,,\omega)}.
\end{equation}
(Note that for $Z\in\cs$ and $\ell\in\omega$, $Z\uh_{[\ell,\,\omega)}$ is the tail of $Z$ beginning with its $(\ell+1)$-st bit.)  Let $S$ be the set of $n\in\omega$ such that $X(n)$ is queried in the computation of $\Xi(X)\uh|\tau^*|$.  Then for any $Y\in\cs$, $Y(n)=X(n)$ for all $n\in S$ if and only if $\tau_i^Y=\tau_i^X$ for $i=0,\dotsc,j-1$ and 
\[
\Xi(Y)\uh_{[|\tau^*|,\,\omega)}=\bigoplus_{i=0}^{j}\Phi_{A_i}(Y_i)_{\uh[\ell_i,\,\omega)}.
\]
Setting $\calS_i=\Phi_{A_i}(\cs)\cap\llb\Phi_{A_i}(X_i)\uh \ell_i\rrb$ for $i\leq j$, it follows that
\[
\P\cap\llb\Xi(X)\uh |\tau^*|\rrb=\bigotimes_{i=0}^{j}\calS_i.
\]
Thus 
\[\rk_\P(\Xi(X))=\rk_{\P\cap\llb\Xi(X)\uh |\tau^*|\rrb}(\Xi(X))=\bigoplus_{i=0}^{j}\rk_{\calS_i}(\Phi_{A_i}(X_i))=\#\{i\leq j\colon X_i=A_i\},\]
where the last equality follows from the fact that for $i\leq j$,
\[
\rk_{\calS_i}(\Phi_{A_i}(X_i))=
	\left\{
		\begin{array}{ll}
			0 & \mbox{if } X_i\neq A_i, \\
			1 & \mbox{if } X=A_i.
		\end{array}
	\right.
\]

Now, for $n\geq 0$, we define $B^n$ by first defining a sequence ($B_i^n)_{i\in\omega}$ as follows.
\begin{itemize}
\item[--] $B^n_i=A_i$ for $i < n$; and
\item[--] $B^n_i=0^\omega$ for $i \geq n$.
\end{itemize}
Then we set $B^n=\bigoplus_{i\in\omega}B_i^n$.  
We claim that $\rk_\P(\Xi(B^n))=n$ for every ${n\geq 0}$.  For a fixed~$n\in\omega$, suppose that Phase $j-1$ is the last phase successfully completed in the computation of~$\Xi(B^n)$; clearly $j\geq n$.  Then by the above argument, since~$B^n\neq A$,
\[
\rk_\P(\Xi(B^n))=\#\{i\leq j\colon B^n_i=A_i\}=n.\]
As we have $\Xi(B^n)\geq_{\mathrm{tt}}\bigoplus_{i=0}^{n-1}\Phi_{A_i}(A_i)$, and since $\rk\bigl(\bigoplus_{i=0}^{n-1}\Phi_{A_i}(A_i)\bigr)=n$ by the proof of Theorem~\ref{thm-rk2},  we can apply Lemma \ref{lem-rk} to obtain $\rk(\Xi(B^n))\geq n$.  Thus $\rk(\Xi(B^n))=n$.

As the sequence $(\Xi(B^n))_{n\in\omega}$ converges to $\Xi(A)$, ${\rk_\P(\Xi(A))\geq\sup\rk_\P(\Xi(B^n))}$.  Moreover, for $X\neq A$, by Claim 1 and the argument above, $\rk_\P(\Xi(X))<\omega$, and so we have $\rk_\P(R)=\rk_\P(\Xi(A))=\omega$. 
To see that $\rk(R)=\omega$, observe that since $\Xi(A)\uh_{I_k}\geq_{\mathrm{tt}} \Phi_{A_k}(A_k)$ for every $k\in\omega$, we have
\[
\Xi(A)\geq_{\mathrm{tt}}\bigoplus_{i=0}^{n-1}\Phi_{A_i}(A_i)
\]
for every $n\geq 1$.  Since $\rk\bigl(\bigoplus_{i=0}^{n-1}\Phi_{A_i}(A_i)\bigr)=n$, for every $n$, by Lemma \ref{lem-rk}, $\rk(\Xi(A))\geq n$ for every $n\in\omega$ and hence $\rk(R)=\omega$.\claimqed

\medskip

\noindent {\em Claim 4.} $R \in \mathbf{r}$.

\smallskip

\noindent {\em Proof.}  By Claim 1 and the fact that $\Xi(A)$ successfully completes infinitely many phases, $\Xi^{-1}(\{\Xi(A)\})=\{A\}$.  Thus $\{A\}$ is a $\Pi^0_1$ class relative to $\Xi(A)$.  As $A$ is isolated in this class, we have $A\leq_{\mathrm{T}}\Xi(A)$. Since clearly $R=\Xi(A)\leq_{\mathrm{T}}A$, $R\equiv_{\mathrm{T}}A$.\claimqed

\medskip

\noindent {\em Claim 5.} $\Xi(X)\leq_{\mathrm{T}} A$ for all $X\in\cs$.

\smallskip

\noindent {\em Proof.}  By Claim 4, $\Xi(A)\leq_{\mathrm{T}}A$.  So suppose $X\neq A$.  From Equation (\ref{eq-claim3}) in the proof of Claim 3, $\Xi(X)\equiv_{\mathrm{T}}\bigoplus_{i=0}^{j}\Phi_{A_i}(X_i)$ for some $j\in\omega$.  From the proof of Theorem \ref{thm-rk2}, $A\geq_{\mathrm{T}}\bigoplus_{i=0}^j\Phi_{A_i}(X_i)\geq_{\mathrm{T}}\Xi(X)$.\claimqed

\medskip

\noindent {\em Claim 6.} $\P$ is rank-faithful.

\smallskip

\noindent {\em Proof.} We have already shown that $\rk(R)=\rk_\P(R)$.  For $Z\in\P$ with $Z\neq R$, there is some~$Y\neq A$ such that $Z=\Xi(Y)$.  By Claim 1, there is a $j\in\omega$ such that Phase $j-1$ is the last phase successfully completed in the computation of~$\Xi(Y)$. Then from the proof of Claim 3, we see that both
\begin{equation}\label{eq-rk}
\Xi(Y)\equiv_{\mathrm{tt}}\bigoplus_{i=0}^{j}\Phi_{A_i}(Y_i)
\end{equation}
and that $\rk_\P(\Xi(Y))=\#\{i\leq j\colon Y_i=A_i\}$.
Set $Z^*=\bigoplus_{i=0}^{j}\Phi_{A_i}(Y_i)$ and~${A^*=\bigoplus_{i=0}^{j}A_i}$. Note that $A^*$ is random and $\Delta^0_2$, so that by the same argument as in the proof of Theorem \ref{thm-rk2} we can see that \[\rk_{\Psi_{A^*}^{j}(\cs)}(Z^*)=\#\{i\leq j\colon Y_i=A_i\}\] and that $\Psi_{A^*}^{j}(\cs)$ is rank-faithful. So it follows that  
\[\rk(Z^*)=\#\{i\leq j\colon Y_i=A_i\}.\]  Applying Lemma \ref{lem-rk} to Equation (\ref{eq-rk}) yields 
\[
\rk(\Xi(Y))=\#\{i\leq j\colon Y_i=A_i\}=\rk_\P(\Xi(Y)),
\]
 and so $\P$ is rank-faithful.\claimqed

\medskip

This completes the proof of Theorem~\ref{thm-omegacase}
\end{proof}

\section{Proof of Theorem \ref{thm-main}}\label{sec_full_proof}

When combining the ideas used in the proofs of Theorems \ref{thm-rk1}, \ref{thm-rk2}, and \ref{thm-omegacase} to obtain the full proof of Theorem \ref{thm-main}, there are several issues of uniformity that we have to take into account.  In particular, we will define a functional
$\Theta(e,a,X)$, where $e$ is the index of a $\Delta^0_2$-approximation of a  set, $a$ is the notation of a computable ordinal, and $X\in\cs$.  We will also make use of the fact that there are computable functions $g\colon  \omega\times \{0,1\}\rightarrow \omega$ and $h\colon \omega\times\omega\rightarrow\omega$ such that if $e$ is an index of a $\Delta^0_2$-approximation of a sequence $A\in\omega$, then
\begin{itemize}
\item[--] for $i\in\{0,1\}$, $g(e,i)$ is an index of a $\Delta^0_2$-approximation of $A_i$, where ${A=A_0\oplus A_1}$,
\item[--] for $n\in\omega$, $h(e,n)$ is an index of a $\Delta^0_2$-approximation of $A_n$, where ${A=\bigoplus_{i\in\omega}A_i}$.
\end{itemize}

\begin{proof}[Proof of Theorem~\ref{thm-main}]
Let $A\in\mathbf{r}$ be a $\Delta^0_2$ random sequence and let $j$ be an index for some $\Delta^0_2$-approximation $(A_s)_{s\in\omega}$ of $A$.
We proceed by effective transfinite induction.  \\

\smallskip

\noindent {\em Case 1:}  $\alpha=1$.  This is Theorem \ref{thm-rk1}.\\

\smallskip

\noindent {\em Case 2:} $\alpha=\beta+1$.  Assume that for each $\Delta^0_2$ sequence $B\in\MLR$, each index~$k$ of a $\Delta^0_2$\nobreakdash-approximation of $B$, and for each $c\in\O$ such that $|c|_\O<\alpha$, we have defined $\Theta$ on all inputs of the form $(k,c,X)$ for arbitrary $X\in \cs$ in such a way that each of the restricted functionals $\Theta(k,c,\cdot)\colon \cs\rightarrow\cs$ is total and such that
\begin{itemize}
\item[(a)] $\Theta(k,c,B)$ and $\lambda_{\Theta(k,c,\cdot)}$ satisfy the conditions (i)-(iii) of the theorem for $|c|_\O$;
\item[(b)] $\Theta(k,c,X)\leq_{\mathrm{T}} B$ for every $X\in\cs$;
\item[(c)] $\Theta(k,c,B)\equiv_{\mathrm{T}} B$;
\item[(d)] $\rk_\P(\Theta(k,c,B))=|c|_{\mathcal{O}}$, where $\P$ is the range of $\Theta(k,c,\cdot)$; 
\item[(e)] for every $X\neq B$, $\rk_\P(\Theta(k,c,X))<|c|_\O$; and
\item[(f)] $\P$ is rank-faithful.
\end{itemize}
Then for $b\in\omega$ with $|b|_{\O}=\beta$, we define 
\begin{samepage}\[
\Theta(j,2^b,X)=\Theta\bigl(g(j,0),b,X_0\bigr)\oplus\Theta\bigl(g(j,1),2,X_1\bigr),
\]
where $X=X_0\oplus X_1$.\end{samepage} Note that $|2^b|_{\O}=\alpha$ and $|2|_{\O}=1$.

Since $g(j,0)$ is an index for a $\Delta^0_2$-approximation of $A_0$, by our inductive hypothesis, $\Theta(g(j,0),b,\cdot)$ induces a computable, countably supported measure $\mu_0$ concentrated on a $\Pi^0_1$~class~$\P_0$ containing the sequence $\Theta(g(j,0),b,A_0)$, which satisfies the conditions of the theorem for the ordinal $\beta$.  In particular, we have ${\rk(\Theta(g(j,0),b,A_0))=\rk_{\P_0}(\Theta(g(j,0),b,A_0))=\beta}$.

Similarly, $g(j,1)$ is an index for a $\Delta^0_2$-approximation of $A_1$, and so by our inductive hypothesis, $\Theta(g(j,1),2,\cdot)$ induces a computable, countably measure $\mu_1$ concentrated on a $\Pi^0_1$~class~$\P_1$ containing the sequence $\Theta(g(j,1),2,A_1)$, which satisfies the conditions of the theorem for the ordinal 1.  
In particular, we have $\rk(\Theta(g(j,1),2,A_1))=\rk_{\P_1}(\Theta(g(j,1),2,A_1))=1$.

Let $\P=\P_0\otimes\P_1$ and let ${\mu=\mu_0\otimes\mu_1}$ be defined by ${\mu(\llb\sigma\oplus\tau\rrb)=\mu_0(\sigma)\cdot\mu_1(\tau)}$. 
We verify that the sequence ${R=\Theta(j,2^b,A)=\Theta(g(j,0),b,A_0)\oplus\Theta(g(j,1),2,A_1)}$ satisfies the conditions of the theorem for $\alpha=\beta+1$.
First, $\mu$ is countably supported because~$\mu_0$ and~$\mu_1$ are countably supported.  Moreover, $R\in\MLR_\mu$ by the preservation of randomness.  By condition~(b) of the inductive hypothesis we have $\Theta(g(j,0),b,X)\leq_{\mathrm{T}}A_0$ and $\Theta(g(j,1),2,X)\leq_{\mathrm{T}}A_1$ for all $X\in\cs$. Thus, for~$X=X_0\oplus X_1\in\cs$,
\[
\Theta(j,2^b,X)=\Theta(g(j,0),b,X_0)\oplus\Theta(g(j,1),2,X_1)\leq_{\mathrm{T}} A_0\oplus A_1=A.  
\]
By condition (c) of the inductive hypothesis we have 
\[
R=\Theta(j,2^b,A)=\Theta(g(j,0),b,A_0)\oplus\Theta(g(j,1),2,A_1)\equiv_{\mathrm{T}} A_0\oplus A_1=A.  
\]
To see that $\rk(R)=\alpha$,  first note that, by Theorem \ref{thm-owings1},
\[
\rk_\P(R)=\rk_{\P_0}(\Theta(g(j,0),b,A_0))\oplus\rk_{\P_1}(\Theta(g(j,1),2,A_1))=\beta+1.
\]
Since $R\geq_{\mathrm{tt}}\Theta(g(j,0),b,A_0)$, by Lemma \ref{lem-rk}, $\rk(R)\geq\rk(\Theta(g(j,0),b,A_0))=\beta$.  In fact, we must even have $\rk(R)=\alpha$, for if $\rk(R)=\beta$, then by Theorem \ref{thm-owings2} we would have ${R\leq_{\mathrm{T}} \Theta(g(j,0),b,A_0)}$, which contradicts the fact that 
\[
\Theta(g(j,1),2,A_1)\equiv_{\mathrm{T}} A_1\not\leq_{\mathrm{T}} A_0\equiv_{\mathrm{T}} \Theta(g(j,0),b,A_0).
\]
So $\P$ is rank-faithful.

\medskip

\noindent {\em Case 3:}  $\alpha$ is a limit ordinal. Then there is some $e\in\omega$ such that $|3\cdot 5^e|_\O=\alpha$.  It follows that $(\phi_e(n))_{n\in\omega}$ is a sequence of notations of computable ordinals $\gamma_n$ such that $\sup_{n\in\omega}\gamma_n=\alpha$. Setting $A=\bigoplus_{i\in\omega}A_i$, it follows from the remarks at the beginning of this section that $h(j,n)$ is an index for a $\Delta^0_2$-approximation of~$A_n$ for each $n\in\omega$.  

Again assume that for each $\Delta^0_2$ sequence $B\in\MLR$, each index $k$ of a $\Delta^0_2$\nobreakdash-approximation of $B$, and for each $d\in\O$ such that $d<_\O 3\cdot 5^e$, we have defined $\Theta$ on all inputs of the form $(k,d,X)$ for arbitrary $X\in \cs$ in such a way that each of the restricted functionals $\Theta(k,d,\cdot)\colon \cs\rightarrow\cs$ is total and 
satisfies the conditions (a)-(f) above.

For each $i\in\omega$, let $\Phi_i=\Theta(h(j,i),\phi_e(i),\cdot)$ and let $\Xi$ be the dynamic join functional associated with the sequence $(\Phi_i)_{i\in\omega}$.  By the inductive hypothesis, for each $i\in\omega$, $\Phi_i$~induces a computable measure $\mu_i$ concentrated on a $\Pi^0_1$ class~$\Q_i$ which contains the sequence~$\Phi_i(A_i)$ and satisfies the conditions of the theorem for $\gamma_i$.

If $\mu$ is the measure induced by $\Xi$, then since $\Xi$ is total and $A\in\MLR$, ${\Xi(A)\in\MLR_\mu}$.  Let~$\P=\Xi(\cs)$.  We verify claims analogous to those in the proof of Theorem \ref{thm-omegacase}.

\medskip

\noindent {\em Claim 1.} For $X\neq A$, $\Xi(X)$ only successfully completes finitely many phases.  

\smallskip

\noindent {\em Proof.} As this depends solely on the general properties of~$\Xi$, no new argument is needed.\claimqed

\medskip

\noindent {\em Claim 2.} $\mu$ is countably supported.

\medskip

\noindent {\em Proof.} As this depends solely on the general properties of~$\Xi$, no new argument is needed.\claimqed

\medskip

\noindent {\em Claim 3.} $\rk_\P(R)=\alpha$.

\nopagebreak\smallskip\nopagebreak

\noindent {\em Proof.} Proceeding as in the proof of Claim 3 inside the proof of Theorem \ref{thm-omegacase}, for each $X\in\cs$ with $X\neq A$ and $X=\bigoplus_{i\in\omega}X_i$ we have
\[
\rk_\P(\Xi(X))=\bigoplus_{\{i\leq j\colon X_i=A_i\}}\gamma_i.
\]
Next, for $n\geq 0$, as before we define ${B^n=\bigoplus_{i\in\omega}B_i^n}$, where
\begin{itemize}\begin{samepage}
\item[--] $B^n_i=A_i$ for $i< n$; and
\item[--] $B^n_i=0^\omega$ for $i\geq n$.\end{samepage} 
\end{itemize}

It is straightforward to verify as above that for every $n$, there is some $j\geq n$ such that $\rk_\P(\Xi(B^n))=\bigoplus_{i=0}^j\gamma_i<\alpha$.
As in the proof of Theorem~\ref{thm-omegacase}, the sequence $(\Xi(B^n))_{n\in\omega}$ converges to $\Xi(A)$, thus \[\rk_\P(\Xi(A))\geq\sup\rk_\P(\Xi(B^n))=\alpha.\] Furthermore, again  as in the proof of Theorem~\ref{thm-omegacase}, for $X\neq A$ we have $\rk_\P(\Xi(X))<\alpha$, and so ${\rk_\P(R)=\rk_\P(\Xi(A))=\alpha}$.

As it holds that $\Xi(A)\uh_{I_k}\geq_{\mathrm{tt}} \Phi_k(A_k)$ for every $k\in\omega$ we have that
\[\Xi(A)\geq_{\mathrm{tt}}\bigoplus_{i=0}^{n-1}\Phi_i(A_i)\]
for every $n\geq 1$.  Since $\rk\bigl(\bigoplus_{i=0}^{n-1}\Phi_{i}(A_i)\bigr)=\bigoplus_{i=0}^{n-1}\gamma_i\geq\gamma_{n-1}$ for every $n$ by the inductive hypothesis,
Lemma \ref{lem-rk} implies $\rk(\Xi(A))\geq \gamma_{n-1}$ for every $n\in\omega$. Thus~${\rk(R)=\alpha}$.\claimqed

\medskip

\noindent {\em Claim 4.} $R \in \mathbf{r}$.

\nopagebreak\smallskip\nopagebreak

\noindent The proof is the same as that of Claim 4 in the proof of Theorem~\ref{thm-omegacase}.

\medskip

\noindent {\em Claim 5.} $\Xi(X)\leq_{\mathrm{T}} A$ for all $X\in\cs$.

\nopagebreak\smallskip\nopagebreak

\noindent The proof is the same as that for Claim 5 in the proof of Theorem~\ref{thm-omegacase}.

\medskip

\noindent {\em Claim 6.} $\P$ is rank-faithful.

\nopagebreak\smallskip\nopagebreak

\noindent {\em Proof.} The proof is very similar to that for Claim 6 in the proof of Theorem~\ref{thm-omegacase}.   We highlight the key differences. First, for any sequence $Y\neq A$ so that Phase~${j-1}$ is the last phase that is successfully completed in the computation of~$\Xi(Y)$, we have 
$
\rk_\P(\Xi(Y))=\bigoplus_{i=0}^{j}\gamma_i
$.
Next, as before,
$
{\Xi(Y)\equiv_{\mathrm{tt}}\bigoplus_{i=0}^{j}\Phi_{A_i}(Y_i)}
$,
and so by Lemma \ref{lem-rk}, $\rk(\Xi(Y))=\rk\bigl(\bigoplus_{i=0}^{j}\Phi_{A_i}(Y_i)\bigr).$
Setting $A^*=\bigoplus_{i=0}^{j}A_i$ as before, by the inductive hypothesis, $\Psi_{A^*}^{j}(\cs)$ is a rank-faithful $\Pi^0_1$~class in which we have 
\[
\rk_{\Psi_{A^*}^{j}(\cs)}\Biggl(\bigoplus_{i=0}^{j}\Phi_{A_i}(Y_i)\Biggr)=\bigoplus_{i=0}^{j}\gamma_i.
\]
Thus, we can conclude from the above that
\[
\rk(\Xi(Y))=\rk\Biggl(\bigoplus_{i=0}^{j}\Phi_{A_i}(Y_i)\Biggr)=\rk_{\Psi_{A^*}^{j}(\cs)}\Biggl(\bigoplus_{i=0}^{j}\Phi_{A_i}(Y_i)\Biggr)=\bigoplus_{i=0}^{j}\gamma_i=\rk_\P(\Xi(Y)),
\] as needed.
\claimqed

\medskip

\noindent This completes the proof of Theorem~\ref{thm-main}.
\end{proof}

\section{Future work}  

Theorem \ref{thm-main} immediately holds for every randomness notion $\mathcal{R}$ with the property that $\mu$-Martin-L\"of randomness implies $\mu$\nobreakdash-$\mathcal{R}$\nobreakdash-randomness; in particular it holds for $\mu$-Schnorr randomness and $\mu$\nobreakdash-computable randomness.  It might be worth exploring in future work whether some analogue of the theorem holds for notions of randomness that are stronger than Martin-L\"of randomness and allow for $\Delta^0_2$ random sequences.

As shown by Cenzer and Remmel~\cite{CenRem98}, for any computable ordinal $\alpha$ that is either $0$ or a limit ordinal and any $n\in\omega$, for any Turing degree $\mathbf{a}$ such that $\mathbf{0}^{(\alpha+2n+1)}\leq\mathbf{a}\leq\mathbf{0}^{(\alpha+2n+2)}$, there is some sequence $B$ of degree $\mathbf{a}$ with $\rk(B)=\alpha+n+1$. 

\begin{question}
Can $B$ be chosen to be proper?
\end{question}

\bibliographystyle{asl}
\bibliography{rankrandomness}

\end{document}